\documentclass{article}

\usepackage[english]{babel}

\usepackage[letterpaper,top=2cm,bottom=2cm,left=3cm,right=3cm,marginparwidth=1.75cm]{geometry}

\usepackage{graphicx}
\usepackage[colorlinks=true, allcolors=blue]{hyperref}
\usepackage{authblk}
\usepackage[utf8]{inputenc} 
\usepackage[T1]{fontenc}    

\usepackage{url}            
\usepackage{booktabs}       
\usepackage{amsfonts,amsmath,amssymb,amsthm}

\usepackage{cleveref}
\usepackage{nicefrac}       
\usepackage{microtype} 
\usepackage{subfigure}
\usepackage{subcaption}
\usepackage{xcolor}         
\usepackage[linesnumbered,ruled,vlined]{algorithm2e}
\usepackage{algorithmic}
\usepackage{mathtools}
\usepackage{natbib}

\newtheorem{theorem}{Theorem}
\newtheorem{remark}{Remark}
\newtheorem{lemma}{Lemma}
\newtheorem{proposition}{Proposition}

\usepackage{array}
\usepackage{adjustbox,tabularx}

\begin{document}

\title{Orientation Determination of Cryo-EM Images Using Block Stochastic Riemannian Subgradient Methods}

\author[1]{Wanyu Zhang\thanks{wanyuzhang@stu.sufe.edu.cn}}
\author[2]{Ruili Gou}
\author[3]{Huikang Liu}
\author[2]{Zhiguo Wang}
\author[4]{Yinyu Ye}
\affil[1]{Shanghai University of Finance and Economics}
\affil[2]{Sichuan University}
\affil[3]{Shanghai Jiao Tong University}
\affil[4]{Stanford University}

\maketitle

\begin{abstract}
The determination of molecular orientations is crucial for the three-dimensional reconstruction of Cryo-EM images. Traditionally addressed using the common-line method, this challenge is reformulated as a self-consistency error minimization problem constrained to rotation groups. In this paper, we consider the least-squared deviation (LUD) formulation and employ a Riemannian subgradient method to effectively solve the orientation determination problem. To enhance computational efficiency, a block stochastic version of the method is proposed, and its convergence properties are rigorously established. Extensive numerical evaluations reveal that our method not only achieves accuracy comparable to that of state-of-the-art methods but also delivers an average 20-fold speedup. Additionally, we implement a modified formulation and algorithm specifically designed to address scenarios characterized by very low SNR.
\end{abstract}

\section{Introduction}
Cryogenic electron microscopy (Cryo-EM) is crucial for reconstructing the 3D structures of macromolecules, providing evidence for the architecture of biomolecules \citep{frank2001cryo}. In Cryo-EM, identical biomolecules are rapidly frozen in their native states within vitreous ice, allowing for the generation of images from various orientations that are integrated to resolve the structure of the observed particles \citep{weissenberger2021understanding}. In the imaging process, only a small amount of electrons is used to reduce damage to molecules, resulting in images with a low signal-to-noise ratio (SNR).

A prerequisite for 3D reconstruction is to obtain the projection direction of each image. However, this task is challenging because Cryo-EM projection directions are random and unknown \cite{bendory2020single, lyumkis2019challenges}. This randomness arises from the unpredictable orientations and positions of biomolecules during sample preparation and the randomizing effect of the electron beam during imaging. Furthermore, the high-energy electron beam can damage the sample, preventing repeated imaging from known directions. These challenges necessitate sophisticated computational methods to determine directions from randomly oriented noisy images.

In Cryo-EM orientation determination, two streams of methods are used: the random-conical tilt technique \citep{radermacher1986new} and common-line-based algorithms \citep{singer2010detecting,van1987angular,singer2011three,wang2013orientation}. The random-conical tilt method collects images from particles at known and zero tilt angles, directly revealing particle orientations relative to the tilt axis. In contrast, common-line-based algorithms apply the Fourier projection slice theorem \citep{gustafsson1996mathematics}, which posits that the 2D Fourier transform of a projection image forms a slice through the 3D Fourier space orthogonal to the projection direction. This principle shows that two such transformed images intersect along a line—the common line—allowing for the inference of relative orientations by analyzing the similarity of Fourier coefficients along these intersections.

The common-line approach is preferred as it uses the Fourier transform's properties to deduce spatial relationships without physically tilting the samples, thereby reducing radiation risk and avoiding sample damage. It also aids in 3D structural reconstruction by aligning multiple 2D images through their Fourier space intersections \cite{crowther1970three,rosenthal2015high}. Therefore, this paper focuses on the common-line-based approach. 

\textbf{Related work.} In the field of common-line-based orientation determination algorithms, \cite{singer2011three} derive least squares (LS) formulation on the rotation group, and address it with eigenvector relaxation and semi-definite programming (SDP) techniques. \cite{shkolnisky2012viewing} further formulates the problem as a synchronization problem based on LS. To handle outliers from misdetected common-lines, \cite{wang2013orientation} propose a robust least unsquared deviation (LUD) formulation, solved using semidefinite relaxation technique, and apply a spectral norm constraint to manage solution clustering under high noise. \cite{bandeira2020non} view this task as a special case of a non-unique game, which is relaxed by SDP and solved by ADMM. These approaches, however, focus on relaxed versions of the problem. Furthermore, SDP is notoriously difficult to optimize for large-scale data. Recently, \cite{pan2023orientation} solves the LUD formulation directly by transforming the LUD problem into a series of weighted LS problems, tackled through iterative reweighted least squares (IRLS) and projection gradient descent (PGD). 

\textbf{Motivations.} However, most existing methods are computationally inefficient, especially when millions of images are involved in real-world applications. Additionally, these methods struggle to maintain high accuracy as SNR decreases. These challenges persist as open problems in the field of orientation determination, motivating us to design more efficient algorithms. Notably, the LUD formulation, as described by \eqref{formula: LUD}, presents a nonconvex and nonsmooth optimization problem on the Riemannian manifold. Extensive research has been conducted on such problems, including works by \citep{absil2008optimization, yang2014optimality,li2021weakly, huang2022riemannian, chen2020proximal}. Specifically, \cite{liu2023resync} has demonstrated the effectiveness of a Riemannian subgradient-based algorithm (ReSync) for robust least-unsquared rotation synchronization, achieving linear convergence. This suggests potential for efficient problem-solving that could be further enhanced by integrating stochastic techniques.

\textbf{Main contributions.} In this paper, we build on the Riemannian subgradient-based algorithm (ReSync) framework introduced by \cite{liu2023resync}, proposing a block stochastic Riemannian subgradient (BSGD) method to address the LUD formulation of the orientation determination problem. Our contributions are summarized as follows:

\begin{itemize}
    \item We propose a block stochastic Riemannian subgradient (BSGD) method to solve the LUD formulation of the orientation determination problem and establish its convergence property. 
    \item Extensive experiments demonstrate that the proposed method achieves comparable accuracy and an averaging 20-fold speedup compared to existing common-line-based state-of-the-art methods.
    \item To tackle the scenarios with very low SNR, we adopt the LUD formulation with a spectral norm constraint and modify our method to effectively solve it.
\end{itemize}

\section{Problem formulation}

Now we present a mathematical formulation of the orientation determination problem. Suppose we have collected $K$ projection images $P_1,...,P_K$ of the same molecule, each of them is taken at randomly unknown directions characterized by some rotation matrices $R_1,...,R_K \in \text{SO}(3)$. Our goal is to recover all the rotation matrices given these noisy projections. 

In this paper, we adopt a common-line-based approach to solve this problem. Firstly, 2D Fourier transformation is applied to projection images \cite{dutt1993fast,fessler2003nonuniform} and pairs of common-lines are subsequently detected. For readers interested in the detailed background of common-line method, refer to \cite{wang2013orientation}. Denote \(c_{ij},c_{ji} \in \mathbb{R}^3\) as the detected common lines in transformed projections $\hat P_i,\hat P_j$, respectively. Since $\hat P_i,\hat P_j$ are inherently obtained by rotations $R_i,R_j$, by Fourier projection-slice theorem, the common-lines would remain to intersect at 3D Fourier space after rotated by $R_i,R_j$. Mathematically, such a relationship can be captured by
\begin{align} \label{eq:cl}
    R_i c_{ij} = R_j c_{ji}, \quad \forall~ 1\leq i < j \leq K.
\end{align} 
So far we have established the basic ideas of common-line-based approaches, then we consider formulating the orientation determination problem as an optimization problem on Stiefel manifolds. Specifically, we present two kinds of formulations, least square (LS) formulation and least unsquared deviation (LUD) formulation.

\textbf{LS formulation.} The overdetermined linear system \eqref{eq:cl} is typically solved by the least square approach, which is formulated by the following manifold optimization problem
\begin{align}\label{formula: LS}
\min_{R_1,...,R_K \in \textbf{SO}(3)} \sum_{i,j\in [K]} \|R_i c_{ij}-R_j c_{ji}\|_2^2.
\end{align}
Problem \eqref{formula: LS} is a nonconvex optimization problem on the Riemannian manifold, favored for its smoothness properties. Previous efforts include \cite{singer2011three} addressing this formulation through eigenvector and semi-definite relaxations. However, the least squares formulation lacks robustness as the sum of squared errors may be heavily influenced by outliers.

\textbf{LUD formulation.} A more robust alternative is the least unsquared deviation formulation, which aims to minimize the sum of unsquared errors:
\begin{align}\label{formula: LUD}
    \min_{R_1,...,R_K \in \textbf{SO}(3)} f(R)=\sum_{i,j\in [K]} \|R_i c_{ij}-R_j c_{ji}\|_2.
\end{align}
Problem \eqref{formula: LUD} is nonconvex and nonsmooth, and is more difficult to optimize than the LS formulation. \cite{wang2013orientation} firstly propose semi-definite relaxation for LUD and apply ADMM algorithm to solve it. \cite{pan2023orientation} applies the projection gradient descent method to solve it. 

\section{Proposed algorithms} \label{sec:algo}

\subsection{Initialization}

We adopt the eigenvector relaxation method for efficient and high-quality initialization. Considering the LS formulation \eqref{formula: LS}, it is equivalent to the maximization problem of the sum of dot products:
\begin{align}\label{init: dot}
    & \max_{R_1,...,R_K \in \textbf{SO}(3)} \sum_{i,j\in [K]} R_i c_{ij} \cdot R_j c_{ji}=\sum_{i,j\in [K]} \text{tr}(R_i c_{ij}c_{ji}^T R_j^T)=\text{tr}(R^T M R) 
\end{align}
where $M_{ij}=c_{ij}c_{ji}^T\in \mathbb{R}^{3\times 3}$, and $M=[M_{ij}]\in\mathbb{R}^{3K\times 3K}$. To make this optimization problem tractable, we relax the manifold constraints and consider the following problem:
\begin{align}\label{init: relaxed}
\begin{split}
     \max_{R\in \mathbb{R}^{3K \times 3}} & \quad \text{tr}(R^T M R) \quad
    \text{subject to} \quad   R^T R = K\cdot I_3.
\end{split}
\end{align}
The solution to the maximization problem \eqref{init: relaxed} is therefore given by the top three eigenvectors $v_1, v_2, v_3$ (multiplied by constant $\sqrt{K}$) of $M$. Let $A=[v_1, v_2, v_3]\in \mathbb{R}^{3K\times 3}$ and it is then divided into $3\times 3$ matrices $A_1,...,A_K$. Subsequently, rotations are recovered by letting $R_i=U_i V_i^T$, where $A_i = U_i \Sigma V_i^T$ is its singular value decomposition.

\subsection{Riemannian subgradient method}
We consider applying the Riemannian subgradient method to solve problem \eqref{formula: LUD}. The main idea is to compute the subgradient in Euclidean space, then the Riemannian subgradient is derived from the projection on tangent space, and finally, the retraction operator is posed to guarantee feasibility. The pseudocode is presented by Algorithm \ref{algo:ReSync}. Here we start by deriving the form of Euclidean subdifferential:
\begin{align}\label{eq:full-subgrad}
    \partial f(R_i)= \sum_{j\in [K]} \partial f_{ij}(R_i)= \begin{dcases*}
  \sum_{j\in [K]} \frac{(R_i c_{ij}-R_j c_{ji})c_{ij}^T}{\|R_i c_{ij}-R_j c_{ji}\|} & if $\|R_i c_{ij}-R_j c_{ji}\| \neq 0$ \\
  V\in \mathbb{R}^{3\times 3}, \|V\|_F\leq 1& \text{otherwise}
\end{dcases*} 
\end{align}
where $f_{ij}=\|R_i c_{ij}-R_j c_{ji}\|_2$. Let $\tilde \nabla f(R_i)\in \partial f(R_i)$ refer to the Euclidean subgradient of block variable $R_i$, then the Riemannian subgradient $\tilde \nabla_{\mathcal{R}} f(R_i)$ can be obtained by project Euclidean subgradient onto tangent space:
\begin{align}\label{eq:proj}
    \tilde \nabla_{\mathcal{R}} f(R_i) = \mathcal{P}_{\text{T}_{R_i}}(\tilde \nabla f(R_i)), 1\leq i \leq K
\end{align}
where the projection on the tangent space $\text{T}_{R_i}$ at $R_i$ can be computed as $\mathcal{P}_{\text{T}_{R_i}}(B)=R_i(R_i^T B - B^T R_i)/2$ for any $B \in \mathbb{R}^{3\times 3}$.
Subsequently, the Riemannian subgradient descent is given by $R_i^+=R_i-\mu \tilde \nabla_{\mathcal{R}} f(R_i)$. Unfortunately, $R_i^+$ may violate the manifold constraint. One common approach is to employ a retraction operator to guarantee feasibility. For $\textbf{SO}(3)$, we can use a QR decomposition-based retraction and implement the Riemannian subgradient step as
\begin{align}\label{eq:retr}
    R_i^+=\text{Retr}_{R_i} (-\mu \tilde \nabla_{\mathcal{R}} f(R_i))=\text{Qr}(R_i-\mu \tilde \nabla_{\mathcal{R}} f(R_i)),
\end{align}
where $\text{Qr}(B)$ refers to the Q-factor in the thin QR decomposition of $B$.

\begin{minipage}{0.46\textwidth}
\begin{algorithm}[H] \label{algo:ReSync}
\SetKwInput{KwInput}{Input}
\SetKwInput{KwOutput}{Output}
\SetKwInput{KwInitialize}{Initialize}
\SetKwInput{KwRequire}{Require}
\SetAlgoLined
\DontPrintSemicolon
\caption{ReSync}
\KwInput{Initialization \( R^0\), stepsize \(\mu_0\)}
Set iteration count \( t = 0 \);\;
\While{stopping criterion not met}{
    Update the step size \( \mu_t \)\;
    \For{\( i = 1 \) \KwTo \( K \)}{
        \(\partial f(R_i)=\sum_{j \in [K]} \partial f_{i,j}(R_i)\)\;
        \(\tilde \nabla_{\mathcal{R}} f(R_i) = \mathcal{P}_{\text{T}_{R_i}}(\tilde \nabla f(R_i))\)\;
        \( R_{i}^{t+1} = \text{Retr}_{R_{i}^t} ( -\mu_k \widetilde{\nabla} f(R_i^t) ) \)\;
    }
    Update iteration count \( t = t + 1 \);\;
}
\end{algorithm}
\end{minipage}
\hfill
\begin{minipage}{0.48\textwidth}
\begin{algorithm}[H]\label{alg:stochastic}
\SetKwInput{KwInput}{Input}
\SetKwInput{KwOutput}{Output}
\SetKwInput{KwInitialize}{Initialize}
\SetKwInput{KwRequire}{Require}
\SetAlgoLined
\DontPrintSemicolon
\caption{Stochastic ReSync}
\KwInput{Initialization \(R^0\), stepsize \(\mu_0\), filter ratio \(\rho_1, \rho_2\)}
Set iteration count \( t = 0 \)\;
\While{stopping criterion not met}{
    Update the step size \( \mu_t \)\;
    Draw \(\mathcal{S},\mathcal{D}\subset [K]\) by \(\rho_1, \rho_2\) \;
    \For{\( i \in \mathcal{D} \)}{
        \(\partial f(R_i)=\sum_{j \in \mathcal{S}} \partial f_{i,j}(R_i)\)\;
        execute line 6-7 in Algorithm \ref{algo:ReSync}\;
    }
    Update iteration count \( t = t + 1 \);\;
}
\end{algorithm}
\end{minipage}

\subsection{Stochastic ReSync algorithms}

In this section, we explore stochastic variants of the ReSync algorithm to enhance its computational speed. We introduce a stochastic update approach where, instead of updating all rotation components using all common lines, we update only a subset of rotations, $\mathcal{D} \subseteq [K], |D|=\rho_1 K$, using a selected subset of data, $\mathcal{S} \subseteq [K], |S|=\rho_2 K$. Specifically, the update rule \eqref{eq:full-subgrad} is substituted by
\begin{align}\label{eq:BSGD-subgrad}
    \partial f(R_i)=\sum_{j \in \mathcal{S}} \partial f_{i,j}(R_i), \quad \forall i\in \mathcal{D}.
\end{align}
We remark that many stochastic algorithms can be reduced to \eqref{eq:BSGD-subgrad}, like stochastic gradient descent (SGD) \cite{amari1993backpropagation}, block coordinate descent (BCD) \cite{tseng2001convergence}, and block stochastic gradient descent (BSGD) \cite{xu2015block, zhao2014accelerated}.
\begin{itemize}
    \item ReSync-SGD: \(\rho_1=1, \rho_2=\rho\), all rotations are updated using $\rho$-fraction of data.
    \item ReSync-BCD: \(\rho_1=\rho, \rho_2=1\), $\rho$-fraction of rotations is updated by full subdifferential.
    \item ReSync-BSGD:  \(\rho_1=\rho_2=\rho\) and \(\mathcal{S}=\mathcal{D}\), only $\rho$-fraction of rotations is updated using the same fraction of data.
\end{itemize}
We refer to the parameter $\rho$ as the filter ratio, which dictates the proportion of data utilized in these updates. After computing the Euclidean subgradient, we apply projection and retraction operators to a subset $\mathcal{D}$ of rotations. The pseudocode for the general stochastic ReSync algorithm is outlined in Algorithm \ref{alg:stochastic}.

Additionally, we can enhance the stability of stochastic algorithms by incorporating the random reshuffling (RR) trick \cite{safran2020good,li2023convergence}. It is also noteworthy that BSGD offers an intuitive method for solving orientation determination problems. Given that the objective function \eqref{formula: LUD} features a double-sum structure and symmetry, in each iteration, we simply sample a subset of rotations and optimize the corresponding subproblem:
\begin{align} \label{formula: LUD-sub}
& \min_{R_i \in \textbf{SO}(3), i\in \mathcal{S}} \sum_{i, j \in \mathcal{S}} \|R_i c_{ij}-R_j c_{ji}\|_2.
\end{align}
Another advantage of BSGD is its capacity for parallel execution. In each iteration, the rotations can be divided into separate blocks, allowing for the simultaneous optimization of independent subproblems. This underscores BSGD's computational benefits for large-scale applications. In contrast, such parallelization is not feasible with BCD, as it employs a Gauss-Seidel-style update that necessitates sequential execution.

\subsection{Spectral norm constraints for low SNR} \label{sec:norm}

\begin{figure}
    \centering
    \subfigure[Ground-truth]{\includegraphics[width=0.2\textwidth]{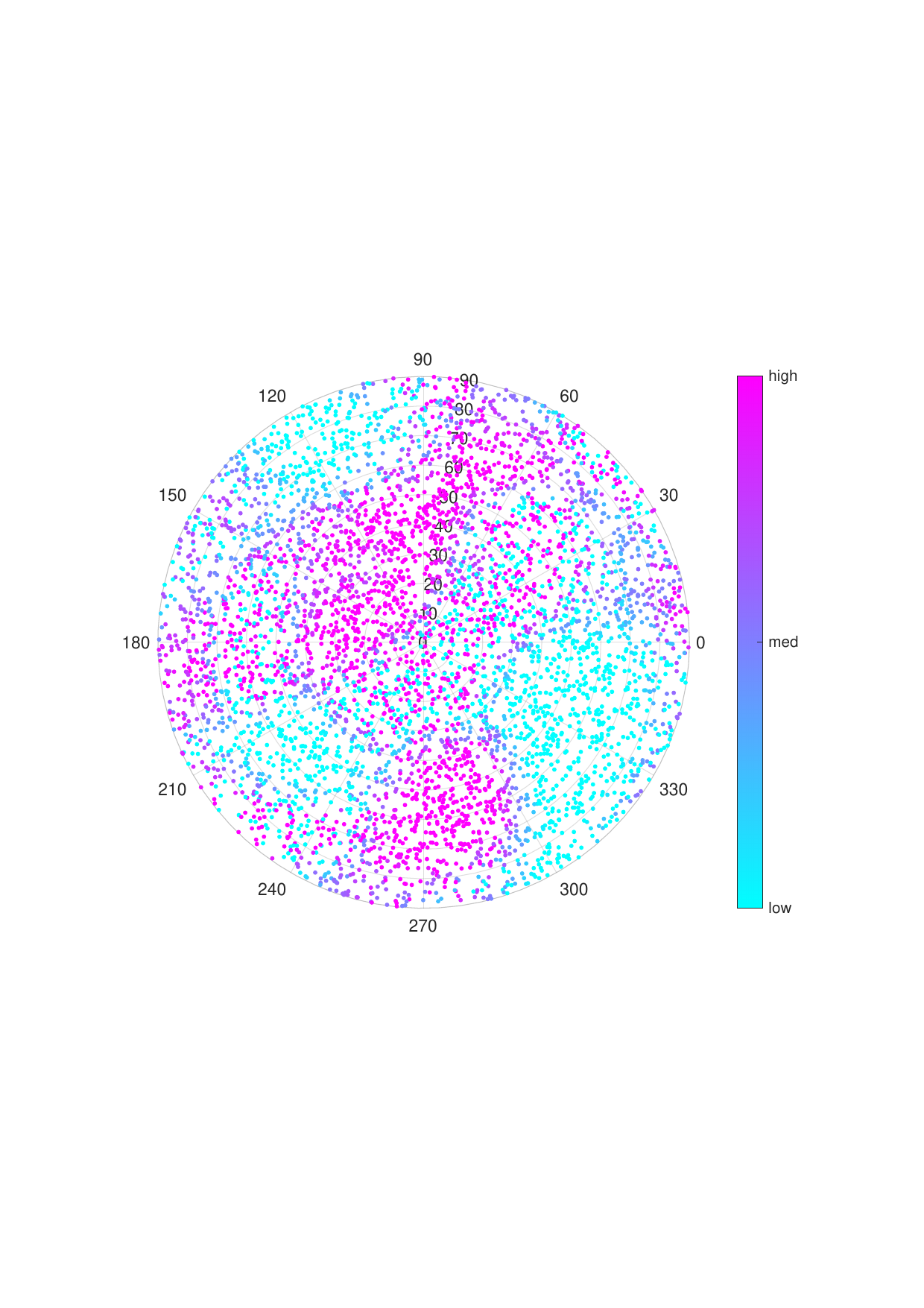}}
    \hspace{1 cm}
    \subfigure[ReSync]{\includegraphics[width=0.2\textwidth]{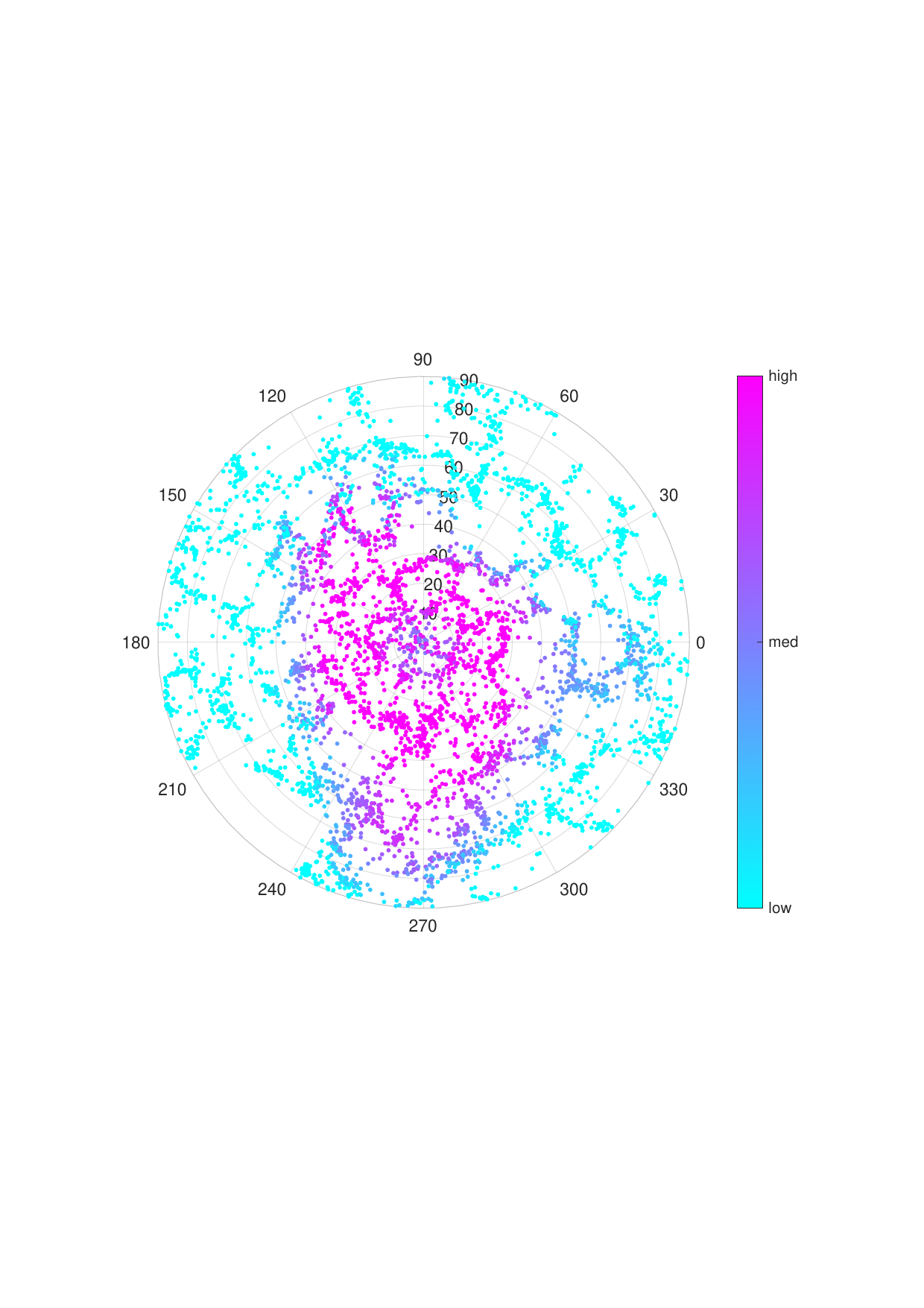}}
    \hspace{1 cm}
    \subfigure[ReSync-norm]{\includegraphics[width=0.23\textwidth]{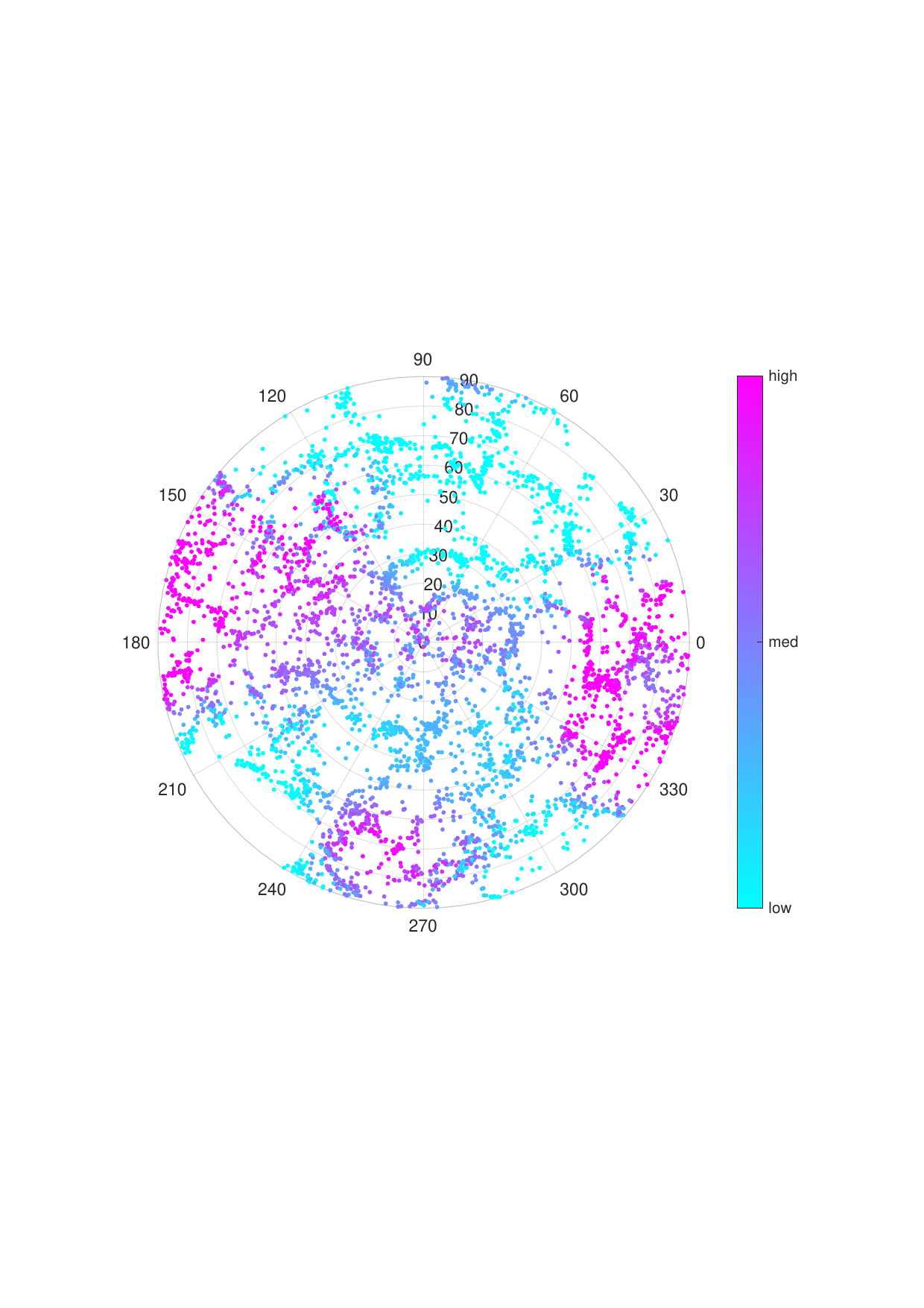}}
    \caption{2D visualization of viewing directions by ground-truth and ReSync algorithms ($K=1000,SNR=1/64$). Each point represents a 3D viewing direction. The highly clustered area is in purple, while low-density area is in blue.}
    \label{fig:vis-direction}
    \vspace{-0.5cm}
\end{figure}

When the images are extremely noisy, there is an empirical observation that most algorithms fail to find high-accuracy solutions. The explanation for this observation is that, in this case, the solved viewing directions are highly clustered at some small region and thus it is far from uniformly distributed ground truth \cite{wang2013orientation}. We also provide visualization of projection direction exhibited in \Cref{fig:vis-direction} (a) (b). 

Denote by $\widetilde{R} \in \mathbb{R}^{2K \times 2}$ the matrix consists of the first two rows of each $R_i, 1 \leq i \leq K$. As suggested in \cite{wang2013orientation}, in this case, the Gram matrix $G=\widetilde{R} \widetilde{R}^T$ has just two dominant eigenvalues, instead of three. Therefore, to prevent clustering, spectral norm constraint \(\|G\|_2 \preceq \alpha K\) is added to the original formulation \eqref{formula: LS} and \eqref{formula: LUD} where \(\alpha = [\frac{2}{3},1) \) is a hyperparameter. Besides, if these rotations are uniformly sampled, then \(\alpha\) is close to \(\frac{2}{3}\). \cite{wang2013orientation} apply this constraint on semi-definite relaxation of LUD formulation, and use alternating direction method of multipliers (ADMM) to solve this problem, referred to as SDR-ADMM method hereafter. 

However, SDR is always time-consuming, so we incorporate spectral norm constraints into our algorithm. We add a surrogate constraint \(\widetilde{R}^T \widetilde{R} = \alpha K I_3\) to get the following optimization problem 
\begin{equation}\label{formula: LUD-norm}
\begin{aligned}
    & \min_{R_1, \ldots, R_K \in \textbf{SO}(3)} & f(R) = \sum_{i \neq j} \|R_i c_{ij} - R_j c_{ji}\|_2 \\
    & \text{subject to} & \widetilde{R}^T \widetilde{R} = \alpha K I_3, \quad 1 \leq i \leq K.
\end{aligned}
\end{equation}
We have adapted the Riemannian subgradient method by substituting the QR retraction with alternating projection onto two Riemannian manifolds: $R_1, \ldots, R_K \in \textbf{SO}(3)$ and $\widetilde{R}^T \widetilde{R} = \alpha K I_3$. This adaptation, called ReSync-norm, demonstrates its effectiveness in preventing clustering viewing directions, as visualized in \Cref{fig:vis-direction} (c).

\section{Convergence analysis} \label{sec:convergence}
In this section, we demonstrate the global convergence of our proposed Riemannian subgradient method, including its stochastic variants: stochastic Riemannian subgradient, block coordinate Riemannian subgradient, and block stochastic Riemannian subgradient methods. We establish that all these methods achieve an iteration complexity of $\mathcal{O}(\varepsilon^{-4})$ to reduce a chosen stationarity measure below $\varepsilon$. This complexity guarantee coincides with estimates provided in \cite{davis2019stochastic} and \cite{li2021weakly} for several algorithms designed to solve weakly convex minimization problems. To the best of our knowledge, this is the first complexity result for the block stochastic Riemannian subgradient method.

Considering the influence of Stiefel manifold constraints on the problem, we adopt a stationarity measure that corresponds to the gradient of the Moreau envelope for weakly convex functions, as detailed in \cite{li2021weakly}. This choice of stationarity measure draws inspiration from recent research on weakly convex minimization in Euclidean spaces, as discussed in\cite{davis2019stochastic,drusvyatskiy2019efficiency}. First, let us define analogs of the Moreau envelope as 
\begin{align}\label{eq:Moreau envelope}
    f_{\lambda}(X) = \min_{Y\in \text{Stiefel}(3K,3)} \left\{ f(Y) + \frac{1}{2\lambda} \|Y - X\|_F^2 \right\}, \quad X \in \text{Stiefel}(3K,3),
\end{align}
and the proximal mapping for any \( \lambda > 0 \) for the problem \eqref{formula: LUD}:
\begin{equation}
P_{\lambda f} (X) = \arg\min_{Y\in \text{Stiefel}(3K,3)} \left\{ f(Y) + \frac{1}{2\lambda} \|Y - X\|_F^2 \right\}, \quad X \in \text{Stiefel}(3K,3).
\end{equation}
We need to notice that, we call $X$ a \emph{stationary point} of problem \eqref{formula: LUD} if $X \in \text{Stiefel}(3K,3)$ satisfies the following first-order optimality condition: 
\begin{equation}\label{eq:first order optimality}
{0} \in \partial_{\mathbf{R}}f(X).
\end{equation}
According to the first-order optimality condition of $P_{\lambda f} (X)$, \eqref{ineq:proxi-opt} and \eqref{ineq:dist-theta} show that 
\begin{equation}
\label{eq:surrogate optimality}
    \text{dist} \left( 0,\partial_{\mathcal{R}} f \left(P_{\lambda f} (X)\right) \right) \leq \lambda^{-1} \cdot \left\|  P_{\lambda f} (X) - X \right\|_F =: \Theta(X).
\end{equation}
Specifically, from \eqref{eq:surrogate optimality}, we observe that when $\Theta(X) = 0$, we have $P_{\lambda f} (X) = X$ and thus $ \text{dist} \left( 0,\partial_{\mathcal{R}} f \left(X\right) \right) = 0$, which implies $X$ is a stationary point of problem \eqref{formula: LUD}. This motivates us to use $X \mapsto \Theta(X)$ as a stationarity measure for problem \eqref{formula: LUD}, and we call $X\in\text{Stiefel}(3K,3)$ an \emph{$\varepsilon$-approximate stationary point} of problem \eqref{formula: LUD} if it satisfies $\Theta(X)\leq \varepsilon$.

Since the Riemannian subgradient, the stochastic Riemannian subgradient, and the block coordinate Riemannian subgradient methods are all special cases of the block stochastic Riemannian subgradient method. So it is enough to estimate the iteration complexity of Algorithm \ref{alg:stochastic}.
\begin{theorem}\label{thm:convergence}
Assuming a constant step size, \(\mu_t = \frac{1}{\sqrt{T+1}}\), \((k=0,1,\ldots)\), where \(T\) represents the total number of iterations, Algorithm \ref{alg:stochastic} uniformly and randomly selects an index \(\overline{t}\) from \(\{1,\ldots,T\}\) and returns the corresponding \(X_{\overline{t}}\). Then, we have
$$
E\left[ \Theta^2\left(X_{\overline{t}}\right) \right] \leq  \frac{c_1\big(f_{\lambda}(X_0) - \min f_{\lambda}\big) + c_2}{\sqrt{T+1}},
$$
where the constants $c_1$ and $c_2$ are given in \eqref{eq:c1c2}. In particular, the iteration complexity of Algorithm \ref{alg:stochastic} for computing an $\varepsilon$-nearly stationary point of the problem \eqref{formula: LUD} is $O(\varepsilon^{-4})$.
\end{theorem}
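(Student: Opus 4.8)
The plan is to follow the Moreau-envelope framework for weakly convex stochastic optimization pioneered by \cite{davis2019stochastic} and adapted to the Stiefel manifold in \cite{li2021weakly}, tracking the decrease of $f_\lambda$ along the iterates. Before the main argument I would collect three structural facts about problem \eqref{formula: LUD}. First, each summand $f_{ij}(R)=\|R_i c_{ij}-R_j c_{ji}\|_2$ is the composition of the Euclidean norm with a linear map, hence convex, and from the explicit subdifferential \eqref{eq:full-subgrad} every element is bounded by $\|c_{ij}\|_2$; summing gives a Lipschitz constant $L$ and shows $f$ is convex in the ambient space, therefore $\rho$-weakly convex when restricted to $\text{Stiefel}(3K,3)$ for a suitable $\rho$ determined by $L$ and the curvature of the manifold. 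Second, the QR-based retraction satisfies a second-order error bound $\|\text{Retr}_X(V)-(X+V)\|_F\le M\|V\|_F^2$ uniformly on the compact manifold. Third, I would record the sampling identities: conditioned on $X_t$, the block stochastic subgradient $G_t$ (set to zero on blocks outside $\mathcal{D}$) satisfies $E[\|G_t\|_F^2]\le\sigma^2$ and $E[G_t\mid X_t]=\rho_1\rho_2\,\xi_t$ for some $\xi_t\in\partial_{\mathcal{R}} f(X_t)$, where the factor $\rho_1\rho_2$ arises from the independent selection of blocks ($\mathcal{D}$) and data ($\mathcal{S}$).

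The core of the proof is a one-step descent inequality on the Moreau envelope. Writing $\hat X_t:=P_{\lambda f}(X_t)$ and using the minimum defining $f_\lambda$ evaluated at $X_{t+1}$, I would start from
\begin{align*}
f_\lambda(X_{t+1}) \le f(\hat X_t) + \tfrac{1}{2\lambda}\|\hat X_t - X_{t+1}\|_F^2 .
\end{align*}
Next I expand $\|\hat X_t - X_{t+1}\|_F^2$ through the update $X_{t+1}=\text{Retr}_{X_t}(-\mu_t G_t)$, replacing the retraction by $X_t-\mu_t G_t$ and absorbing the difference via the second-order bound $M\|\mu_t G_t\|_F^2$. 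This yields, up to controllable $O(\mu_t^2)$ terms,
\begin{align*}
\|\hat X_t - X_{t+1}\|_F^2 \le \|\hat X_t - X_t\|_F^2 + 2\mu_t\langle G_t,\, X_t - \hat X_t\rangle + C\mu_t^2\|G_t\|_F^2 .
\end{align*}
Taking the conditional expectation turns $\langle G_t, X_t-\hat X_t\rangle$ into $\rho_1\rho_2\langle \xi_t, X_t-\hat X_t\rangle$, and $\rho$-weak convexity gives $\langle \xi_t, X_t-\hat X_t\rangle \ge f(X_t)-f(\hat X_t)-\tfrac{\rho}{2}\|X_t-\hat X_t\|_F^2$.

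Substituting these bounds and using the identities $f_\lambda(X_t)=f(\hat X_t)+\tfrac{1}{2\lambda}\|\hat X_t-X_t\|_F^2$ and $\Theta^2(X_t)=\lambda^{-2}\|\hat X_t-X_t\|_F^2$, I would collect everything into a recursion of the form
\begin{align*}
E\big[f_\lambda(X_{t+1}) \mid X_t\big] \le f_\lambda(X_t) - a\,\mu_t\,\Theta^2(X_t) + b\,\mu_t^2,
\end{align*}
valid once $\lambda$ is chosen small enough (e.g.\ $\lambda<\rho^{-1}$) so that the coefficient $a>0$; here $b$ absorbs $\sigma^2$ and the retraction constants. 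Summing over $t=0,\dots,T$, taking total expectation, telescoping $f_\lambda$, and dividing by $(T+1)$ controls the average $\frac{1}{T+1}\sum_t E[\Theta^2(X_t)]$. Since $\overline t$ is drawn uniformly, $E[\Theta^2(X_{\overline t})]$ equals this average, and plugging in the constant step size $\mu_t=(T+1)^{-1/2}$ produces the stated bound with explicit $c_1,c_2$ assembled from $a,b,\lambda$; demanding that the right-hand side be $\le\varepsilon^2$ gives $T=O(\varepsilon^{-4})$.

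The step I expect to be the main obstacle is making the one-step descent rigorous on the manifold: the Euclidean subgradient is computed before the tangent-space projection \eqref{eq:proj}, and the retraction is not the exponential map, so both the bias from projection and the second-order retraction error must be bounded uniformly and shown not to destroy the sign of the leading $-a\mu_t\Theta^2$ term. Coupled with this is the need to verify that the block sampling produces exactly the scaling $\rho_1\rho_2$ against a genuine element of $\partial_{\mathcal{R}} f(X_t)$ rather than a biased surrogate, and that the weak-convexity constant $\rho$ and the second-moment bound $\sigma^2$ hold uniformly over all realizations of $\mathcal{S}$ and $\mathcal{D}$, so that the recursion can be iterated across the random draws.
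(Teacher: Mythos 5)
Your proposal follows essentially the same route as the paper's proof: both track the decrease of the Moreau envelope $f_\lambda$ via the proximal point $P_{\lambda f}(X_t)$, control the retraction by a second-order error bound on the compact Stiefel manifold, invoke the Riemannian weak-convexity (subgradient) inequality of \cite{li2021weakly} to convert the inner product into a function-value gap, and then telescope with the constant step size and uniform selection of $\overline{t}$. The only cosmetic difference is that you make the sampling scaling explicit as $\rho_1\rho_2$ where the paper absorbs it into an unnamed constant $c$; the structure and conclusions are the same.
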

The proof can be found in Appendix \ref{app:conv-analysis}.

\section{Experiments} \label{sec:exp}

To evaluate the performance of the ReSync algorithm and its stochastic variants, we conduct experiments on both simulated rotations and noisy projections of particles. For benchmark methods, we choose eigenvector relaxation \cite{singer2011three}, PGD method for LS and LUD formulations \cite{pan2023orientation}, as well as IRLS method \cite{wang2013orientation}. All experiments are executed on a machine with AMD Ryzen 9 7950X CPU with 16 cores, running at 4.50 GHz. In addition, all results are averaged over 10 independent trials. Our code is available at \url{https://github.com/zwyhahaha/ReSync4CryoEM.git}.

In line with prior studies, we use mean-squared error (MSE) as the metric to assess the accuracy of estimated orientations. The MSE, calculated between estimated rotations \(\hat{R}_1,..., \hat{R}_K\) and the underlying ground-truth rotations \(R_1,..., R_K\),  is defined as
\begin{align} \label{eq:MSE}
    \text{MSE} = \min_{O \in \text{SO}(3)} \frac{1}{K} \sum_{i=1}^K \|R_i - O \hat{R}_i\|_F^2.
\end{align} 
\vspace{-0.4cm}
\subsection{Synthetic data}

\begin{figure}
\vspace{-0.3cm}
    \centering
    \subfigure{\includegraphics[width=0.25\textwidth]{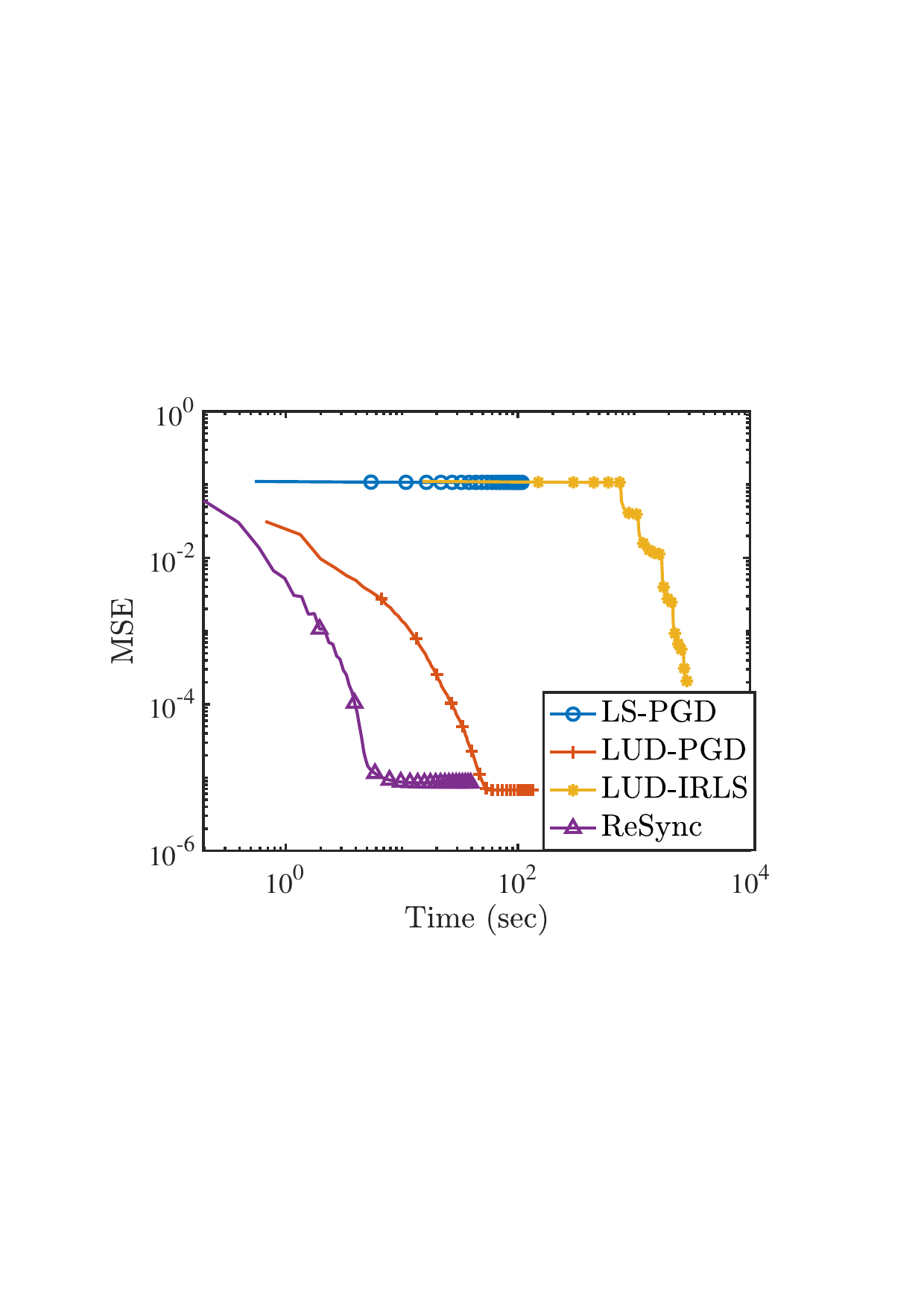}}
    \hspace{0.5 cm}
    \subfigure{\includegraphics[width=0.25\textwidth]{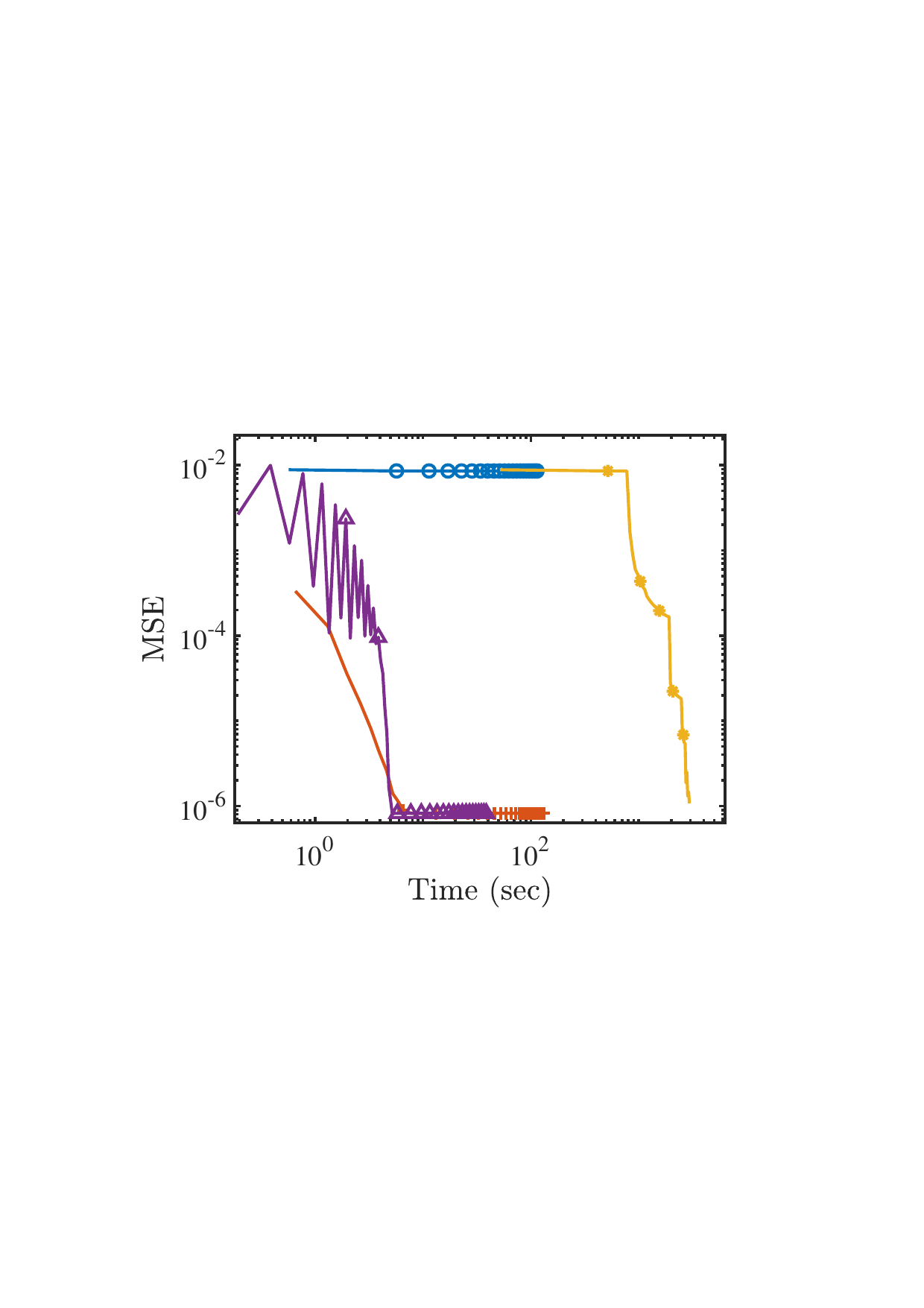}}
    \hspace{0.5 cm}
    \subfigure{\includegraphics[width=0.26\textwidth]{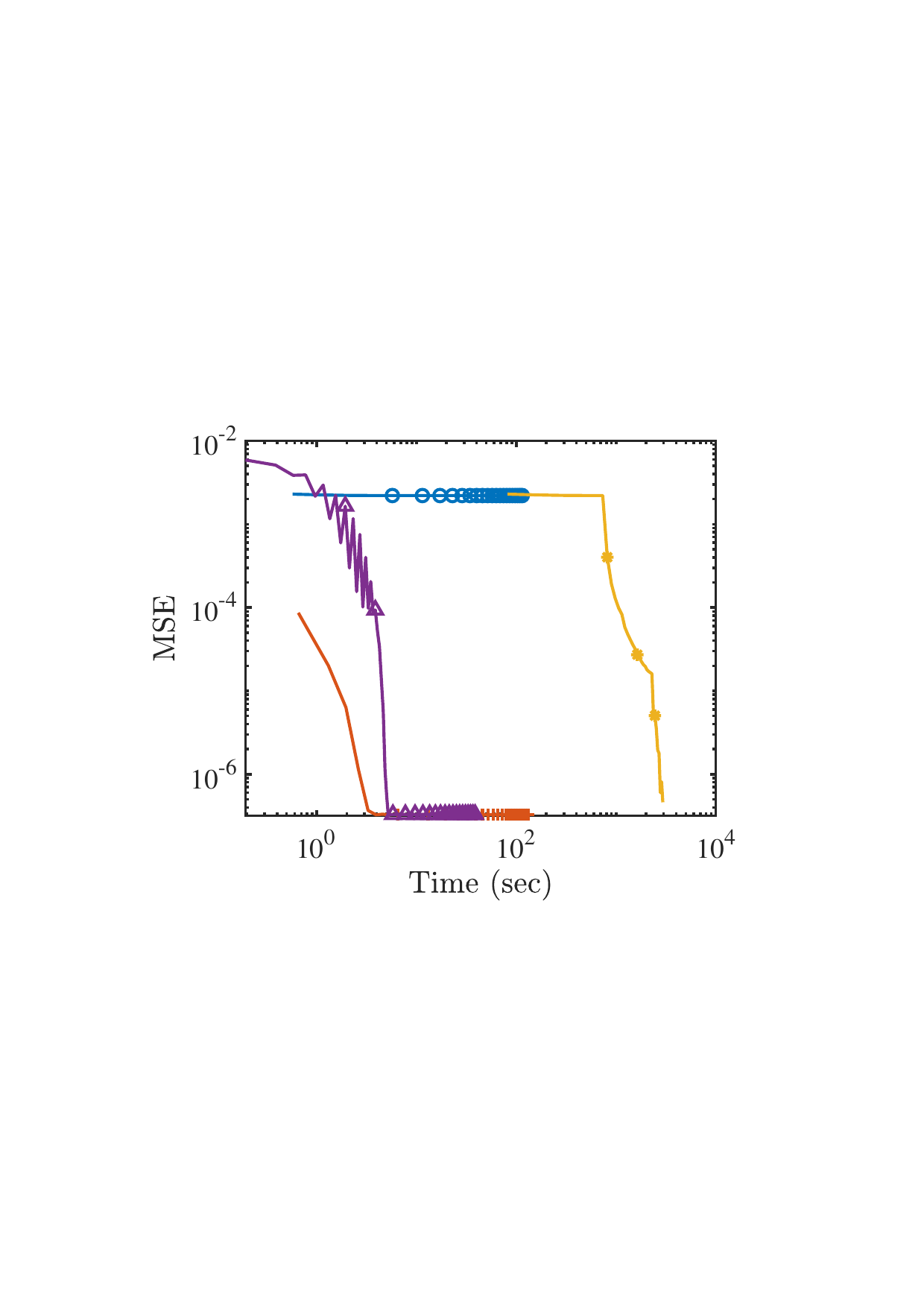}}
    \vspace{-0.2cm}
    \caption{Comparison with other orientation determination algorithms under synthetic data ($K=3000$). From left to right, the detection rate are $p=0.1, 0.3, 0.5$, respectively.}
    \label{fig:plot_benchmark}
    \vspace{-0.1cm}
\end{figure}

\begin{figure}
    \centering
    \subfigure{\includegraphics[width=0.25\textwidth]{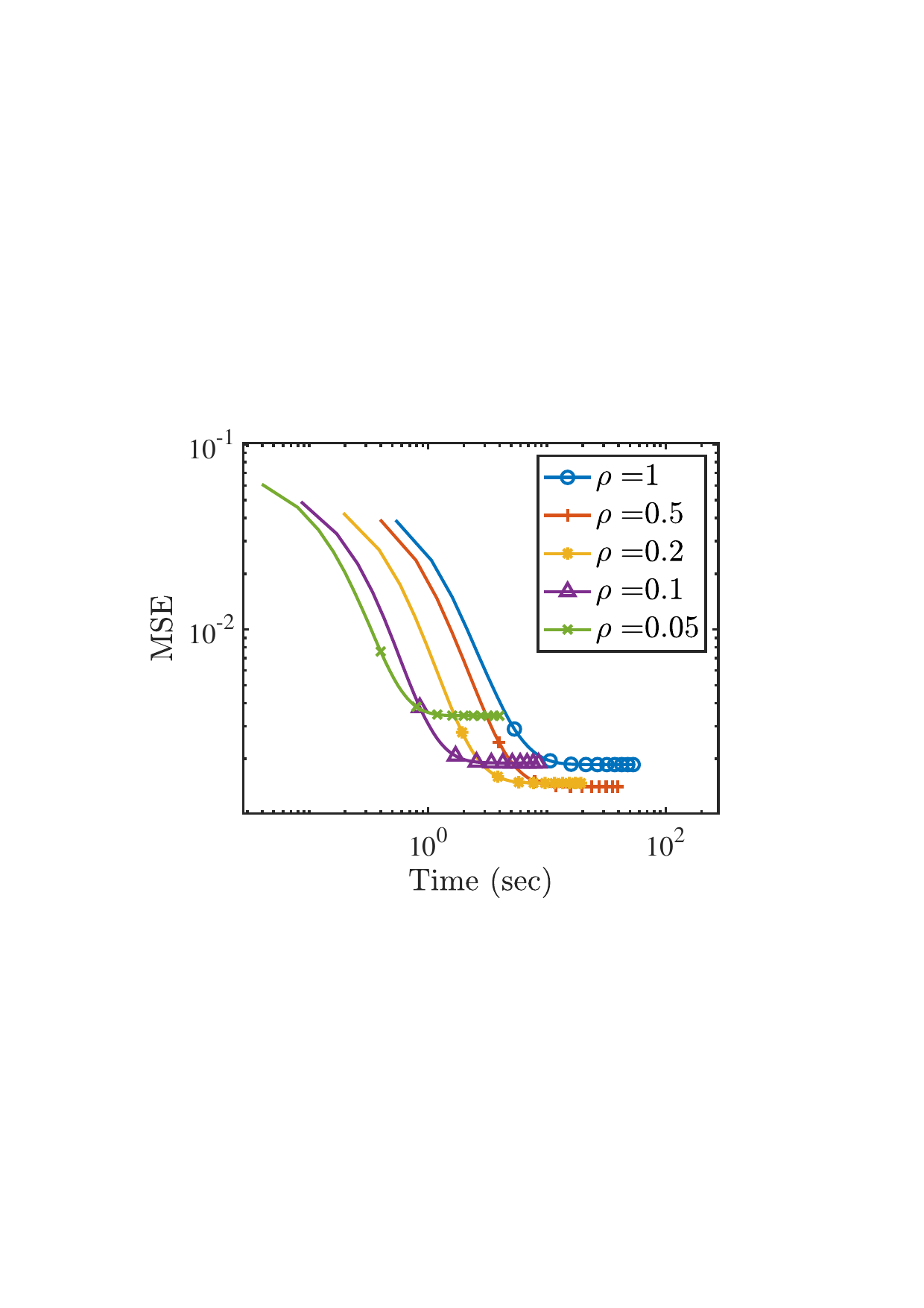}}
    \hspace{0.5 cm}
    \subfigure{\includegraphics[width=0.25\textwidth]{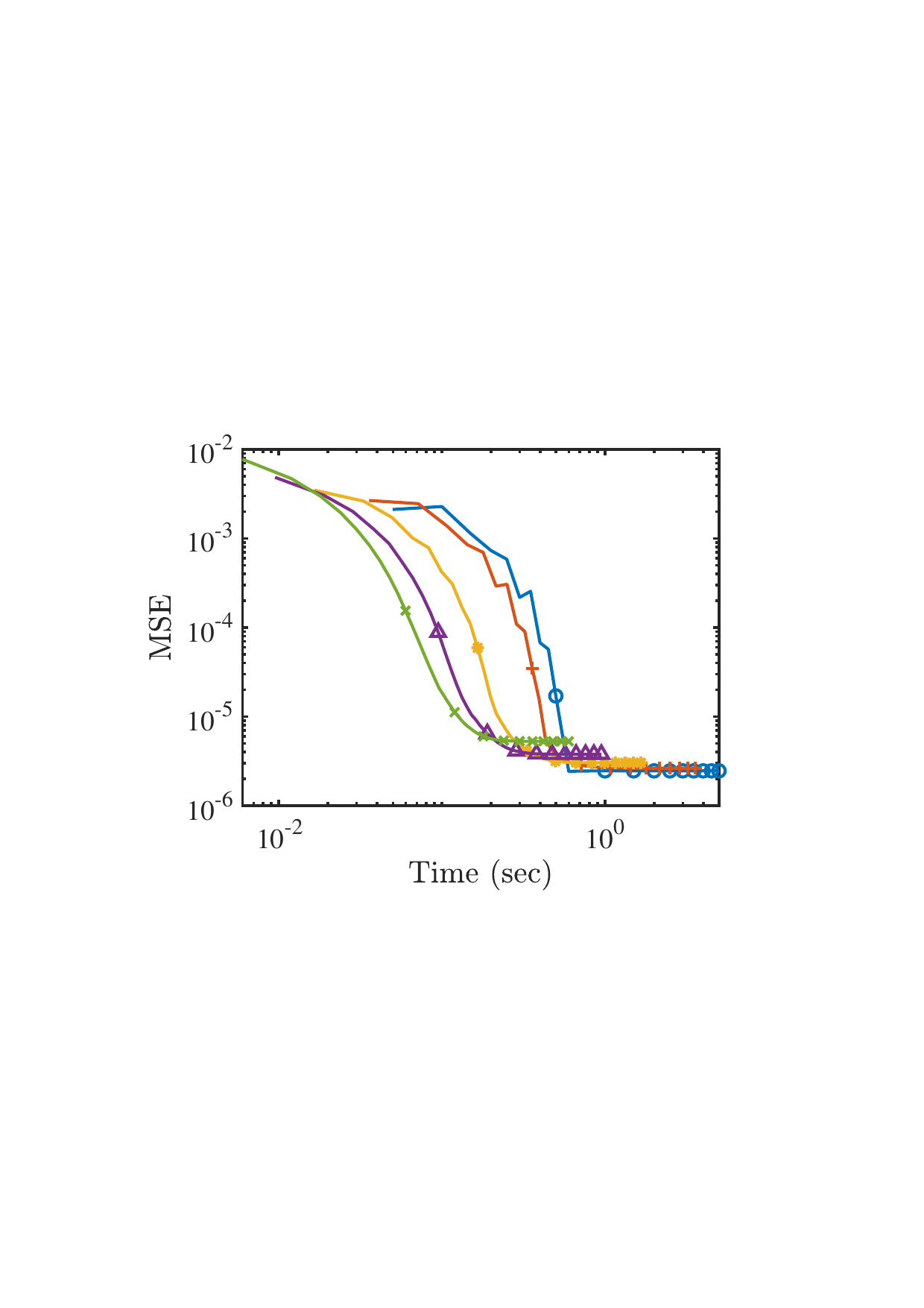}}
    \hspace{0.5 cm}
    \subfigure{\includegraphics[width=0.26\textwidth]{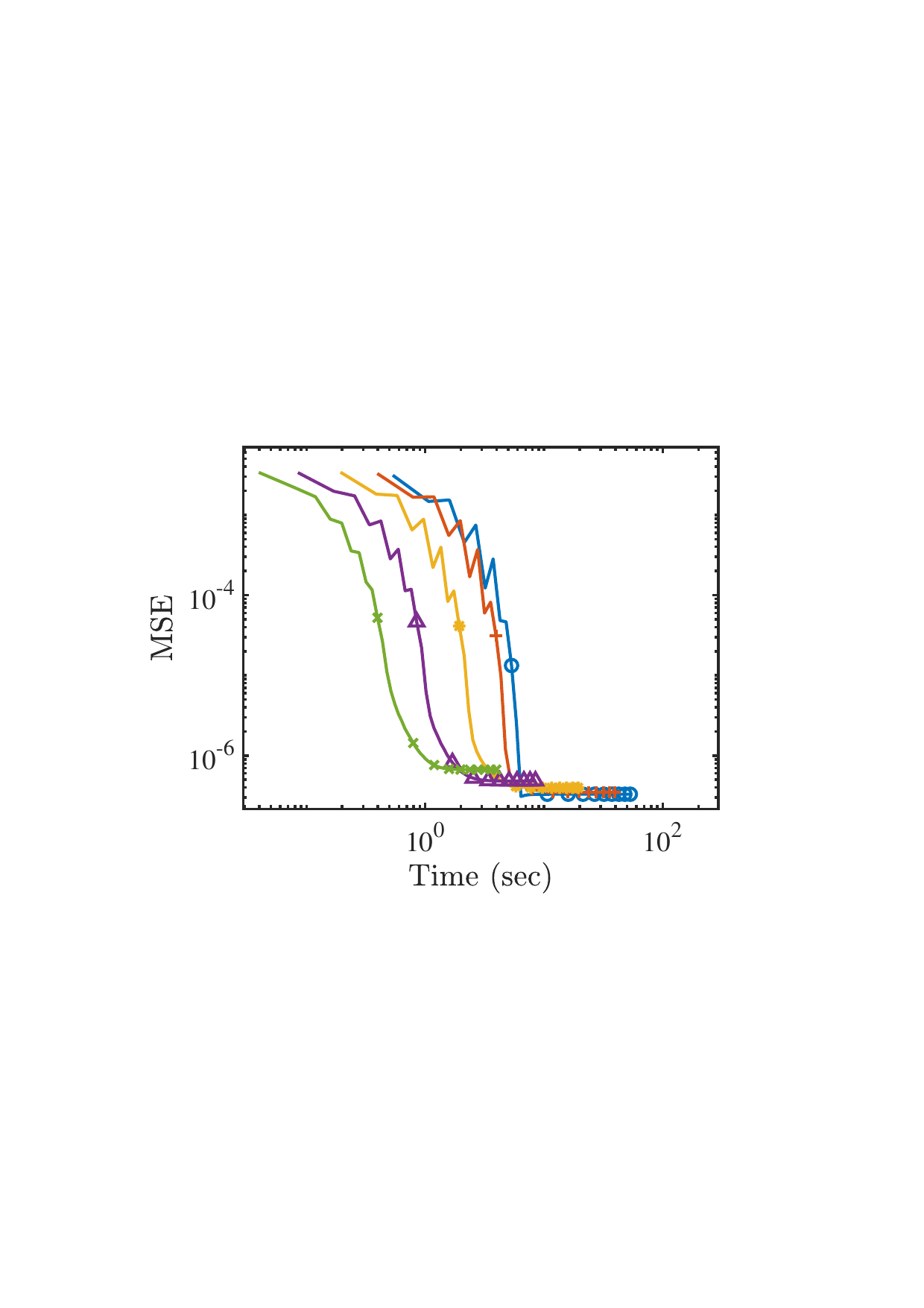}}
    \caption{Convergence of ReSync-BSGD under synthetic data ($K=3000$). From left to right, the detection rate are $p=0.1, 0.3, 0.5$, respectively.}
    \label{fig:plot_random}
\end{figure}

\textbf{Settings.} To assess the efficiency of different algorithms, we conduct experiments using simulated rotations. First, we generate \(K\) ground-truth rotations uniformly from \(\text{SO}(3)\) and calculate pairwise common-line indexes with angular resolution \(n_{\theta}=360\). To simulate the corrupted nature of Cryo-EM images, we introduce a detection rate \(p\), referring to the probability that common lines are correctly detected. Specifically, each common-line pair \((c_{ij},c_{ji})\) remains unchanged with probability \(p\), or is replaced by a uniform random pair with probability \(1-p\). We test values of $p$ at \((0.5,0.3,0.1,0.05)\) and $K$ at \((3000,5000)\).  Notably, simulations at \(p=0.05\) or \(K=5000\) are rarely seen in previous studies. 

\textbf{Convergence verification.} In \Cref{fig:plot_benchmark}, we illustrate the convergence behavior of ReSync algorithms alongside other benchmarks for \(K=3000\) under detection rates \(p=0.1, 0.3, 0.5\). As the figure shows, ReSync algorithm converges rapidly to a high-accuracy solution, particularly at lower 
$p$ values. In \Cref{fig:plot_random}, we display the convergence behavior of ReSync-BSGD with varying filter ratios $\rho$ under identical settings. The figure illustrates a consistent trade-off between efficiency and accuracy: as $\rho$ decreases, the convergence rate improves at the expense of accuracy. Additionally, as the detection rate increases, the accuracy differences among various $\rho$ values become less pronounced.

\textbf{Efficiency benchmarks.} The MSE and running times for various methods with \(K=3000\) and \(K = 5000\) are listed in \Cref{tab:simu}. For consistent comparison, we set the same stopping precision for all methods, achievable within the maximum iteration count for most methods. Every method is warm-started with the solution from eigenvector relaxation. As shown in \Cref{tab:simu}, ReSync-BSGD outperforms other methods in most scenarios by achieving comparable accuracy and a 5 to 30-fold speedup compared to state-of-the-art methods.

\begin{table}[]
\caption{MSE and running time (in seconds) for different methods under simulated data. Here $P$ refers to the detection rate, $K$ is the projection number, and the number in the bracket is the target precision. For stochastic ReSync algorithms, the filter ratio is 0.1.}
\vspace{0.2cm}
\label{tab:simu}
\centering
\small
\resizebox{0.9\textwidth}{!}{
\renewcommand{\arraystretch}{1.1}
\begin{tabular}{|c|c|c|c|c|c|c|c|c|}
\hline
\multicolumn{9}{|c|}{K=3000} \\
\hline
\multicolumn{1}{|c|}{P} & \multicolumn{2}{c|}{0.5/5e-7} & \multicolumn{2}{c|}{0.3/5e-6} & \multicolumn{2}{c|}{0.1/5e-3} & \multicolumn{2}{c|}{0.05/0.4} \\
\hline
Method & MSE & Time & MSE & Time & MSE & Time & MSE & Time \\
\hline
Eig & 2.67E-03 & 14.16 & 9.41E-03 & 14.12 & 1.11E-01 & 14.29 & 6.97E-01& 13.35
\\
LS-PGD & 2.20E-03 & 127.53 & 8.86E-03 & 128.00 & 1.07E-01 & 118.09 & 6.15E-01& 106.60
\\
LUD-PGD & 3.26E-07 & 9.57 & 1.57E-06 & 9.45 & 3.07E-03 & 9.27 & 3.30E-01& 7.81
\\
LUD-IRLS& 3.29E-07 & 25.27 & \textbf{8.34E-07} & 34.62 & \textbf{4.83E-05} & 124.50 & \textbf{1.57E-01}& 167.87
\\
ReSync & 3.25E-07 & 20.29 & 1.15E-06 & 19.71 & 1.78E-03 & 15.99 & 4.23E-01& 14.51
\\
ReSync-BCD& \textbf{3.20E-07} & 9.66 & 1.29E-06 & 8.12 & 2.42E-03 & 5.73 & 4.49E-01& 3.61
\\
ReSync-SGD& 4.12E-07 & 7.77 & 2.37E-06 & 4.12 & 3.25E-03 & 5.53 & 4.55E-01& 3.90
\\
ReSync-BSGD& 4.76E-07 & \textbf{6.47} & 1.35E-06 & \textbf{3.34} & 1.85E-03 & \textbf{3.20} & 4.35E-01& \textbf{2.43}\\
\hline
\multicolumn{9}{|c|}{K=5000} \\
\hline
\multicolumn{1}{|c|}{P} & \multicolumn{2}{c|}{0.5 (5e-7)} & \multicolumn{2}{c|}{0.3 (1e-6)} & \multicolumn{2}{c|}{0.1 (1e-4)} & \multicolumn{2}{c|}{0.05 (0.1)} \\
\hline
Method & MSE & Time & MSE & Time & MSE & Time & MSE & Time \\
\hline
Eig & 1.43E-03 & 39.19 & 5.60E-03& 37.39
& 6.45E-02& 38.79
& 3.31E-01 & 38.62 \\
LS-PGD & 1.31E-03 & 324.48 & 5.28E-03& 315.64
& 6.31E-02& 354.65
& 6.07E-01 & 299.28 \\
LUD-PGD & 1.93E-07 & 26.25 & 5.10E-07& 24.26
& 1.41E-04& 43.31
& \textbf{1.57E-02} & 22.64 \\
LUD-IRLS& 1.95E-07 & 57.15 & 4.96E-07& 87.58
&\textbf{ 1.26E-05}& 250.29
& 4.55E-02 & 573.19 \\
ReSync & 3.65E-07 & 53.91 & 7.48E-07& 51.56
& 3.21E-04& 47.13
& 3.98E-02 & 41.90 \\
ReSync-BCD& \textbf{1.90E-07} & 25.93 & \textbf{4.82E-07}& 24.83
& 3.42E-04& 15.28
& 9.12E-02 & 15.22 \\
ReSync-SGD& 3.02E-07 & 16.04 & 3.61E-06& \textbf{5.19}
& 3.52E-04& 12.43
& 5.34E-02 & 16.24 \\
ReSync-BSGD& 3.13E-07 & \textbf{8.69 }& 1.35E-06& 5.42& 8.48E-05& \textbf{8.40}& 9.41E-02 & \textbf{8.53} \\
\hline
\end{tabular}
}
\vspace{-0.4cm}
\end{table}

\begin{figure}
\vspace{-0.5cm}
    \centering
    \subfigure[clean]{\includegraphics[width=0.14\textwidth]{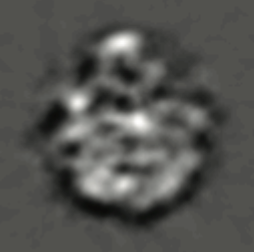}}
    \hspace{0.3 cm}
    \subfigure[SNR=1/16]{\includegraphics[width=0.14\textwidth]{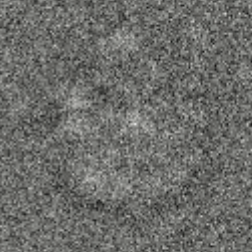}}
    \hspace{0.3 cm}
    \subfigure[SNR=1/32]{\includegraphics[width=0.14\textwidth]{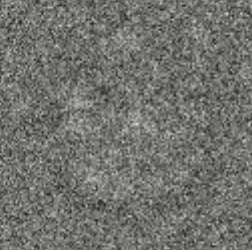}}
    \hspace{0.3 cm}
    \subfigure[SNR=1/64]{\includegraphics[width=0.14\textwidth]{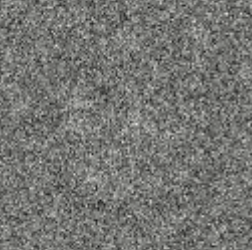}}
    \hspace{0.3 cm}
    \subfigure[SNR=1/128]{\includegraphics[width=0.14\textwidth]{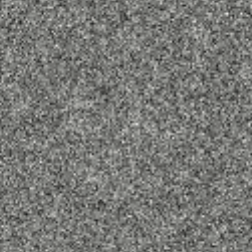}}
    \caption{Projection images of 50-S ribosomal subunit under different SNR levels.}
    \label{fig:SNR}
    \vspace{-0.5cm}
\end{figure}

\subsection{Real data}
\textbf{Setting.} To test the real-world applicability of different methods, we conduct experiments using real projection data. In line with prior studies \cite{wang2013orientation,pan2023orientation}, we select 50-S ribosomal subunit with volume \(65\times 65 \times 65\) as the observing target, whose 3D density map is available. We start by sampling ground-truth rotations \(R_1,..., R_K\), then \(K\) clean projection images with pixel size \(129\times 129\) are generated from the underlying density map. To simulate noisy Cryo-EM images, we add Gaussian noise to these images according to certain SNR. Subsequently, common-lines are estimated from these noisy projections by conducting Fourier transform (angular resolution \(n_{\theta}=360\), radial resolution \(n_r=100\)) and finding the most similar pair of Fourier coefficients. Considering that Cryo-EM images feature low SNR and large numbers, we set \(\text{SNR}=(1/16, 1/32, 1/64, 1/128)\) and \(K=(3000,5000)\). To the best of our knowledge, this marks the first valid documentation of experimental results for \(\text{SNR}=1/128\) in the literature. Additionally, we present projection images of the 50-S ribosomal subunit under this SNR setting in \Cref{fig:SNR}. Notably, at an \(\text{SNR}\) of 1/128, the images are predominantly obscured by significant noise.

\textbf{Efficiency benchmarks.} The experiment results for \(K=3000\) and \(K = 5000\) are shown in \Cref{tab:real}. The stopping criteria are triggered when the relative distance of adjacent iterates is smaller than \(10^{-5}\). All methods are warm-started by eigenvector relaxations. As these tables show, when SNR is \(1/16\) or \(1/32\), most methods converge to a high-accuracy solution, and ReSync-BSGD achieves 15-100 times acceleration compared with LUD-PGD and LUD-IRLS. When \(\text{SNR}=1/64\) and \(K=3000\), all benchmark methods diverge in precision but our stochastic ReSync algorithms can prevent divergence to some extent. However, if SNR decreases to \(1/128\), eigenvector relaxation fails to produce high-quality initialization points and all methods perform poorly in accuracy. 
\begin{table}[ht]
\caption{MSE and running time (in seconds) for different methods under real-world data. Here SNR refers to signal-to-noise ratio and $K$ is the projection number. For stochastic ReSync algorithms, the filter ratio is 0.1.}
\vspace{0.2cm}
\label{tab:real}
\centering
\small 
\renewcommand{\arraystretch}{1.1}
\resizebox{0.9\textwidth}{!}{
\begin{tabular}{|c|c|c|c|c|c|c|c|c|}
\hline
\multicolumn{9}{|c|}{K=3000} \\
\hline
\multicolumn{1}{|c|}{SNR} & \multicolumn{2}{c|}{1/16} & \multicolumn{2}{c|}{1/32} & \multicolumn{2}{c|}{1/64} & \multicolumn{2}{c|}{1/128} \\
\hline
Method & MSE & Time & MSE & Time & MSE & Time & MSE & Time \\
\hline
Eig & 3.28E-02 & 17.40 & 1.02E-01 & 17.12 & 5.96E-01 & 17.02 & 2.71E+00 & 17.10 \\
LS-PGD & 1.00E-01 & 32.60 & 4.60E-01 & 143.96 & 1.21E+00 & 201.93 & 2.25E+00 & 200.73 \\
LUD-PGD & \textbf{6.81E-03} & 45.60 & 6.26E-02 & 75.98 & 2.05E+00 & 203.49 & 2.17E+00 & 212.24 \\
LUD-IRLS& 7.13E-03 & 153.60 & \textbf{4.92E-02} & 471.80 & 2.05E+00 & 820.58 & 2.20E+00 & 772.56 \\
ReSync & 6.89E-03 & 23.70 & 5.99E-02 & 25.84 & 9.58E-01 & 27.58 & 2.18E+00 & 30.16 \\
ReSync-BCD& 7.26E-03 & 6.70 & 5.15E-02 & 8.64 & 6.15E-01 & 8.16 & \textbf{2.16E+00} & 11.02 \\
ReSync-SGD& 7.32E-03 & 7.60 & 5.18E-02 & 9.74 & \textbf{5.79E-01} & 10.00 & \textbf{2.16E+00} & 13.52 \\
ReSync-BSGD& 7.26E-03 & \textbf{2.90} & 5.51E-02 & \textbf{3.05} & 5.88E-01 & \textbf{2.72} & \textbf{2.16E+00} & \textbf{3.02} \\
\hline
\multicolumn{9}{|c|}{K=5000} \\
\hline
\multicolumn{1}{|c|}{SNR} & \multicolumn{2}{c|}{1/16} & \multicolumn{2}{c|}{1/32} & \multicolumn{2}{c|}{1/64} & \multicolumn{2}{c|}{1/128} \\
\hline
Method & MSE & Time & MSE & Time & MSE & Time & MSE & Time \\
\hline
Eig & 3.08E-02 & 47.70 & 4.19E+00 & 47.70 & 4.86E+00 & 48.69 & 3.11E+00 & 48.68 \\
LS-PGD & 1.06E-01 & 125.00 & 3.52E+00 & 580.34 & 2.50E+00 & 760.39 & 2.39E+00 & 794.19 \\
LUD-PGD &\textbf{ 6.70E-03} & 110.23 & 5.22E-02 & 651.85 & \textbf{2.31E+00} & 849.83 & \textbf{2.25E+00} & 848.16 \\
LUD-IRLS& 7.00E-03 & 432.60 & 7.16E-02 & 1388.64 & 2.33E+00 & 2915.65 & 2.29E+00 & 2929.07 \\
ReSync & 6.83E-03 & 61.90 & 5.51E-02 & 93.36 & 3.46E+00 & 77.48 & \textbf{2.25E+00} & 82.98 \\
ReSync-BCD& 7.14E-03 & 14.00 & 5.48E-02 & 16.00 & 3.90E+00 & 23.84 & 2.29E+00 & 34.54 \\
ReSync-SGD& 6.97E-03 & 13.40 & \textbf{5.15E-02} & 16.74 & 3.95E+00 & 24.83 & 2.29E+00 & 35.01 \\
ReSync-BSGD& 7.01E-03 & \textbf{5.70} & 5.36E-02 & \textbf{6.27} & 3.92E+00 & \textbf{9.61} & 2.29E+00 & \textbf{7.79} \\
\hline
\end{tabular}
}
\end{table}

\subsubsection{Techniques in tackling noise}

Previous results reveal that when noise levels are high (with $p=0.05$ or $\text{SNR}=1/64, 1/128$), most methods get stuck at solutions of low accuracy. As discussed in \Cref{sec:norm}, this could be addressed by applying a spectral norm constraint and image denoising techniques to enhance performance. 

To assess the efficacy of norm-constrained methods (i.e., SDR-ADMM and ReSync-norm), we conducted experiments on real data, setting the projection number at $K=3000$ across various SNR levels. We chose a hyperparameter \(\alpha = \frac{2}{3}\), based on the prior knowledge that viewing directions are uniformly distributed in large samples. The results, detailed in \Cref{tab:norm}, indicate that both SDR-ADMM and ReSync-norm perform well at lower SNRs, with SNR\(=1/64\), yielding relatively low MSEs. Notably, ReSync-norm maintains accurate and stable performance as SNR ranges from $1/16$ to $1/64$, whereas SDR-ADMM struggles with higher SNRs. Additionally, according to \Cref{tab:norm-time}, ReSync-norm significantly outperforms SDR-ADMM in computational efficiency, achieving a 25 to 60-fold speed increase.

While norm-constrained methods like SDR-ADMM and ReSync-norm demonstrate superior performance when SNR\(\geq 1/64\), their effectiveness diminishes when faced with extremely high noise levels, such as SNR\(= 1/128\). To mitigate the reduced effectiveness of norm-constrained methods at extremely high noise levels, we have explored the application of image-denoising techniques. Class averaging, the most prevalent preprocessing technique for Cryo-EM images as outlined by \cite{frank2006three}, clusters images with similar viewing angles and averages them to enhance signal clarity. For this clustering, we utilize Vector Diffusion Maps (VDM) \cite{singer2012vector}, which effectively organize complex data in low-dimensional spaces. The results of these experiments, displayed in the last column of \Cref{tab:norm}, indicate that most methods achieve higher precision, likely due to the improved quality of initial data.

\begin{table}[ht]
\centering
\small
\caption{MSE for different methods under various SNR levels and denoised images. Here $K=3000$, $\alpha=2/3$, and methods with spectral norm constraint are in bold font.}
\vspace{0.2cm}
\label{tab:norm}
\renewcommand{\arraystretch}{1.0}
\setlength{\tabcolsep}{8pt} 
\resizebox{0.8\textwidth}{!}{
\begin{tabular}{ccccccc}
\toprule
SNR & 1/16 & 1/32 & 1/64 & 1/128 & 1/128, denoised \\
\midrule
Eig & 0.0328 & 0.1016 & 0.5962 & 2.7131 & 0.8241 \\
LS-PGD & 0.1001 & 0.4595 & 1.2104 & 2.2516 & 0.7518 \\
LUD-PGD & 0.0068 & 0.0626 & 2.0459 & \textbf{2.1717} & 4.9269 \\
LUD-IRLS-PGD & 0.0071 & 0.0904 & 2.0467 & 2.2004 & 4.3826 \\
\textbf{SDR-ADMM} & 0.0315 & 0.1283 & 0.3634 & 2.1946 & \textbf{0.7249} \\
ReSync & 0.0069 & 0.0599 & 0.9577 & 2.1768 & 0.7289 \\
\textbf{ReSync-norm} & \textbf{0.0062} & \textbf{0.0180} & \textbf{0.2495} & 2.3273 & 0.7282 \\
\bottomrule
\end{tabular}
}
\end{table}

\begin{table}[ht]
\centering
\small
\caption{Running time (in seconds) for different methods under various SNR levels. Here $K=3000$, $\alpha=2/3$, and methods with spectral norm constraint are in bold font.}
\vspace{0.2cm}
\label{tab:norm-time}
\renewcommand{\arraystretch}{1.0}
\setlength{\tabcolsep}{8pt} 
\resizebox{0.8\textwidth}{!}{
\begin{tabular}{cccccc}
\toprule
SNR & 1/16 & 1/32 & 1/64 & 1/128, denoised  \\
\midrule
Eig & 17.40 & 17.12 & 17.02 & 17.10  \\
LS-PGD & 32.59 & 143.96 & 201.93 & 200.73  \\
LUD-PGD & 45.60 & 75.98 & 203.49 & 212.24  \\
LUD-IRLS-PGD & 153.56 & 471.80 & 820.58 & 772.56 \\
\textbf{SDR-ADMM} & 13957.06 & 14883.00 & 13722.88 & 9377.16 \\
ReSync & 23.70 & 25.84 & 27.58 & 30.16 \\
\textbf{ReSync-norm} & 232.77 & 296.59 & 359.38 & 406.14\\
\bottomrule
\end{tabular}
}
\end{table}

\section{Conclusion and Limitations} \label{sec:conclusion}

In this paper, we tackle the orientation determination problem in Cryo-EM images. We adopt a common-line-based LUD formulation and propose to use the Riemannian subgradient method to solve this nonconvex and nonsmooth synchronization problem. For tackling a large number of images, we propose a block stochastic Riemannian subgradient method, which achieves a surprising acceleration effect and surpasses its SGD and BCD counterparts. To handle the noisy images, we incorporate spectral norm constraint into our algorithm and apply an alternating projection method to guarantee feasibility. Experiments on both synthetic and real data underscore the superiority in effectiveness and efficiency of our proposed method.

We will discuss the limitations from both empirical and theoretical aspects. In our experiments on real data, we have established the effectiveness of the ReSync-norm algorithm. However, the alternative projection in each iteration is costly in computation. The remaining issue is to develop a more efficient algorithm for tackling norm constraints. Theoretically, we have established the convergence results of ReSync-BSGD, but an in-depth analysis of the convergence rate, especially its superiority over SGD and BCD, is not involved in our result. As a consequence, another research direction is to explore the property of the BSGD algorithm, especially out of the scope of some specific synchronization problem.

\bibliographystyle{plainnat}
\bibliography{ref} 

\appendix

\section{Proof of Theorem \ref{thm:convergence}} \label{app:conv-analysis}
We will follow the proof methodology outlined in \cite{li2021weakly}, starting with the introduction of the \emph{weak convexity inequality} under Riemannian manifold constraints as an essential tool. It is worth mentioning that the introduction of the \emph{weak convexity inequality} under Riemannian manifold constraints, which is used for the restrictions on weakly convex functions on the Stiefel manifold, is a fundamental component in the proof of convergence results in it. 

\begin{theorem}[\cite{li2021weakly}] \label{thm:Riemannian subgradient inequality}
Suppose \( h: \mathbb{R}^{3K \times 3} \rightarrow \mathbb{R} \) is \( \tau \)-weakly convex for some \( \tau \geq 0 \). Then, for any bounded open convex set \( \mathcal{U} \) containing  \( \mathcal{S}tiefel(3K, 3) \), there exists a constant \( L > 0 \) such that \( h \) is \( L \)-Lipschitz continuous on \( \mathcal{U} \), and satisfies the following conditions.
\begin{align}
		h(Y) &\geq h(X) + \left\langle  \widetilde \nabla_{\mathcal{R}} h(X)  , Y-X   \right\rangle  - \frac{\tau+L}{2} \|Y - X\|_F^2, \label{eq:Riemannian subgradient inequality}\\
		&\qquad \forall \ \widetilde \nabla_{\mathcal{R}} h(X) \in \partial_{\mathcal{R}} h(X) \ and \ X, Y \in \text{Stiefel}(3K,3)\notag
\end{align}
\end{theorem}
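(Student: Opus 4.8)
The plan is to reduce the Riemannian inequality \eqref{eq:Riemannian subgradient inequality} to the ordinary Euclidean weak-convexity inequality, paying a quadratic penalty for the curvature of $\text{Stiefel}(3K,3)$. First I would settle the Lipschitz claim. Since $h$ is $\tau$-weakly convex, $g := h + \frac{\tau}{2}\|\cdot\|_F^2$ is convex on $\mathbb{R}^{3K\times 3}$, and a finite convex function is Lipschitz on every bounded subset of its domain; as $\mathcal{U}$ is bounded, $g$ is $L_g$-Lipschitz on $\mathcal{U}$, while the smooth quadratic $\frac{\tau}{2}\|\cdot\|_F^2$ is $L_q$-Lipschitz on the bounded set $\mathcal{U}$. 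Hence $h = g - \frac{\tau}{2}\|\cdot\|_F^2$ is $L$-Lipschitz on $\mathcal{U}$ with $L = L_g + L_q$. A consequence I will use below is that every Euclidean (Clarke) subgradient obeys $\|\widetilde\nabla h(X)\|_F \le L$ for $X \in \mathcal{U}$, since $\text{Stiefel}(3K,3)\subset\mathcal{U}$.

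Next I would invoke the Euclidean counterpart of weak convexity: for any $\widetilde\nabla h(X) \in \partial h(X)$ and any $X,Y$,
\[
h(Y) \ge h(X) + \langle \widetilde\nabla h(X), Y - X\rangle - \tfrac{\tau}{2}\|Y - X\|_F^2 .
\]
The Riemannian subgradient is the tangential part of a Euclidean one, so I write the orthogonal split $\widetilde\nabla h(X) = \widetilde\nabla_{\mathcal{R}} h(X) + N$ into tangent and normal components at $X$. The Stiefel normal space is $\{XS : S = S^T\}$, whence $N = XS$ with $S = \mathrm{sym}(X^T \widetilde\nabla h(X))$ symmetric, and $\langle \widetilde\nabla h(X), Y-X\rangle = \langle \widetilde\nabla_{\mathcal{R}} h(X), Y-X\rangle + \langle N, Y-X\rangle$. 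The whole argument then hinges on controlling the cross term $\langle N, Y-X\rangle$.

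The key geometric step is the identity, valid for $X,Y \in \text{Stiefel}(3K,3)$,
\[
I_3 - \mathrm{sym}(X^T Y) = \tfrac{1}{2}(Y - X)^T (Y - X),
\]
which follows from $X^TX = Y^TY = I_3$. Because $S$ is symmetric, $\langle N, Y-X\rangle = \mathrm{tr}\!\big(S(\mathrm{sym}(X^TY) - I_3)\big) = -\tfrac{1}{2}\mathrm{tr}\!\big(S(Y-X)^T(Y-X)\big)$, and since $(Y-X)^T(Y-X)\succeq 0$ this yields
\[
|\langle N, Y-X\rangle| \le \tfrac{1}{2}\|S\|_2\,\|Y-X\|_F^2 \le \tfrac{1}{2}\|N\|_F\,\|Y-X\|_F^2 \le \tfrac{L}{2}\|Y-X\|_F^2,
\]
where I used $\|S\|_F = \|N\|_F$ (as $X^TX = I_3$) and $\|N\|_F \le \|\widetilde\nabla h(X)\|_F \le L$. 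Substituting the split into the Euclidean inequality and absorbing the $\frac{L}{2}\|Y-X\|_F^2$ term produces exactly \eqref{eq:Riemannian subgradient inequality} with constant $\frac{\tau+L}{2}$.

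I expect the main obstacle to be this geometric cross-term bound: the content of the theorem is precisely that the chord $Y-X$ deviates from the tangent space only to second order, and making this quantitative requires both the block structure of the Stiefel normal space and the positive-semidefiniteness of $(Y-X)^T(Y-X)$. Everything else is bookkeeping, modulo one point I would verify carefully — that $\partial_{\mathcal{R}} h(X)$ is genuinely the tangential projection $\mathcal{P}_{T_X}(\partial h(X))$, so that each $\widetilde\nabla_{\mathcal{R}} h(X)$ does arise as the tangent part of a Euclidean subgradient of norm at most $L$, making the decomposition above legitimate for every element of the Riemannian subdifferential.
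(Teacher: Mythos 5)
Your proof is correct and follows essentially the same route as the source: the paper itself gives no proof of this theorem but imports it verbatim from \cite{li2021weakly}, whose argument is exactly your reduction --- the Euclidean weak-convexity inequality, the orthogonal split of the Euclidean subgradient with normal part $X\,\mathrm{sym}(X^T\widetilde\nabla h(X))$, the chord identity $I_3-\mathrm{sym}(X^TY)=\tfrac{1}{2}(Y-X)^T(Y-X)$, and the Lipschitz bound $\|S\|_2\le\|S\|_F=\|N\|_F\le L$ on the cross term. The one point you flagged, that $\partial_{\mathcal{R}}h(X)=\mathcal{P}_{T_X}(\partial h(X))$, does hold here because $\tau$-weak convexity makes $h$ Clarke regular, so every Riemannian subgradient is the tangential projection of a Euclidean subgradient of norm at most $L$ and your decomposition legitimately covers all of $\partial_{\mathcal{R}}h(X)$.
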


We can verify that the point $P_{\lambda f} (X)$ satisfies the first-order optimality condition:
\begin{equation}\label{ineq:proxi-opt}
0 \in \partial_{\mathcal{R}} f \left(P_{\lambda f} (X)\right) + \frac{1}{\lambda} \mathcal{P}_{\mathcal{T}_{P_{\lambda f} (X)}\text{St}} \left( P_{\lambda f} (X) - X\right).
\end{equation}
Therefore, we have
\begin{equation}\label{ineq:dist-theta}
\begin{split}
    \text{dist} \left( 0,\partial_{\mathcal{R}} f \left(P_{\lambda f} (X)\right) \right) &\leq \lambda^{-1}\cdot \left\| \mathcal{P}_{\mathcal{T}_{P_{\lambda f} (X)}\text{St}} \left( P_{\lambda f} (X) - X\right) \right\|_F \\
    &\leq \lambda^{-1} \cdot \left\|  P_{\lambda f} (X) - X \right\|_F =: \Theta(X).
\end{split}
\end{equation}

\subsection{Construction of the Riemannian Block Stochastic Subgradient Oracle}
With the tools ready, now we can start the algorithm's convergence analysis. Considering the block stochastic gradient descent (BSGD), the randomness comes from the random rearrangement of \(\{1,2,\cdots,K\}\) before each iteration, which leads to the random blocking of \(\{[1], \cdots, [\rho K]\}\). This affects the computation of the approximate subgradient and the update of variables for each block. 

It's important to note that we can choose \(\mathcal{S} = \mathcal{D}\), and considering the properties of the subgradient computation, we can immediately conclude that the computation of the approximate subgradient and the update of variables within each block are independent and self-contained. Let the sample space formed by the random partitioning of \(\{1,2,\cdots,K\}\) be denoted as\ \(\Omega \in \mathbf{R}^K\). Then, \(\zeta_t\) represents the outcome of the \(t\)-th random partitioning, which is distributed uniformly over \(\Omega\). We can  denoted it as \(\zeta_t \sim U(\Omega)\). 

Now, assume that the Riemannian random reshuffling block subgradient method is equipped with a \emph{Riemannian Block Stochastic Subgradient Oracle}, which possesses the following properties:\\
(a) The oracle can generate independent and identically distributed samples according to the distribution \(U(\Omega)\).\\
(b) Given a point \(X \in St(3K, 3)\), the oracle generates a sample \(\zeta \sim U(\Omega)\) and returns a block random approximate Riemannian subgradient
\begin{equation}
\label{eq:block random approximate Riemannian subgradient}
\widetilde{g}_{_\mathcal{R}}(X, \zeta) = 
\begin{pmatrix}
\widetilde \nabla_{R} g_1(X_1, \zeta) \\
\widetilde \nabla_{R} g_2(X_2, \zeta) \\
\vdots\\
\widetilde \nabla_{R} g_K(X_K, \zeta)
\end{pmatrix}
\end{equation}
\begin{remark}
In this setup, the random reshuffling of \(\{1, 2, \cdots, K\}\) results in blocks \([1], [2], \cdots, [m]\) with\ $m=\rho K$ and each block having a size of \(1/\rho\). For any \( i \in [n] \), \\
\begin{equation}
g_i(X_i,\zeta)=\sum_{\substack{j \in [n] \\ j \neq i}} f_{i, j}(X_i)   
\end{equation}
\end{remark}
Next, we will establish the relationship between the alternative stationary measure \(\Theta\) and the Moreau envelope \(f_\lambda\) in terms of sufficient reduction under expectation.
\begin{lemma}\label{lem:nonexpansive polar retra}    
Given \(X \in \text{Stiefel}(3K,3)\) and \(\xi \in T_X St\), consider a retraction based on QR decomposition, then it satisfies
\begin{align*}
\left\|\text{Retr}_{X}(\xi) - \overline X \right\|_F \le \|X + \xi  - \overline X\|_F + b \|\xi\|_F^2,
\end{align*}
\end{lemma}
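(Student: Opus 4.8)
The plan is to reduce the stated inequality to the standard \emph{first-order boundedness} of the QR retraction, namely that $\text{Retr}_X(\xi)$ deviates from the Euclidean step $X+\xi$ only by a term quadratic in $\|\xi\|_F$. Concretely, I would first prove that there is a constant $b>0$, depending only on the manifold $\text{Stiefel}(3K,3)$ and a bound on $\|\xi\|_F$, such that
\begin{equation}\label{eq:first-order-retr}
\left\| \text{Retr}_X(\xi) - (X+\xi) \right\|_F \le b\,\|\xi\|_F^2
\end{equation}
for every $X\in\text{Stiefel}(3K,3)$ and every $\xi\in T_X\text{St}$. Granting \eqref{eq:first-order-retr}, the lemma is immediate from the triangle inequality,
\begin{align*}
\left\| \text{Retr}_X(\xi) - \overline X \right\|_F
&\le \left\| \text{Retr}_X(\xi) - (X+\xi) \right\|_F + \left\| (X+\xi) - \overline X \right\|_F \\
&\le \left\| X+\xi-\overline X \right\|_F + b\,\|\xi\|_F^2,
\end{align*}
which holds for an arbitrary reference matrix $\overline X$, so no structural assumption on $\overline X$ is needed.

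To establish \eqref{eq:first-order-retr} I would argue directly from the thin QR factorization. Write $X+\xi=\text{Qr}(X+\xi)\,R$ with $R\in\mathbb{R}^{3\times3}$ upper triangular and positive on the diagonal, so that $\text{Retr}_X(\xi)=(X+\xi)R^{-1}$ and hence
\begin{align*}
\text{Retr}_X(\xi)-(X+\xi)=(X+\xi)\,(R^{-1}-I_3).
\end{align*}
The decisive algebraic fact is that the tangency condition $\xi\in T_X\text{St}$ gives $X^{T}\xi+\xi^{T}X=0$, so that
\begin{align*}
R^{T}R=(X+\xi)^{T}(X+\xi)=I_3+\xi^{T}\xi .
\end{align*}
Thus $R$ is the Cholesky factor of $I_3+\xi^{T}\xi$. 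Since $R^{T}R\succeq I_3$, every singular value of $R$ is at least $1$, giving $\|R^{-1}\|_{op}\le1$; and since $\|\xi^{T}\xi\|_F\le\|\xi\|_F^2$, a Cholesky-perturbation estimate yields $\|R-I_3\|_F\le c\,\|\xi\|_F^2$ for a constant $c$ valid on any bounded range of $\|\xi\|_F$. Combining these with $\|R^{-1}-I_3\|_F=\|R^{-1}(I_3-R)\|_F\le\|R^{-1}\|_{op}\|R-I_3\|_F$ and the bound $\|X+\xi\|_{op}\le 1+\|\xi\|_F$ (recall $\|X\|_{op}=1$ on the Stiefel manifold) produces \eqref{eq:first-order-retr}.

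An equivalent but more abstract route to \eqref{eq:first-order-retr} is to invoke smoothness of the retraction: the QR retraction is a smooth map with $\text{Retr}_X(0)=X$ and $D\,\text{Retr}_X(0)[\xi]=\xi$, so Taylor's theorem produces a second-order remainder whose coefficient is uniformly bounded by compactness of $\text{Stiefel}(3K,3)$. I expect the main obstacle to be precisely this uniformity: one must guarantee that the constant in $\|R-I_3\|_F\le c\,\|\xi\|_F^2$ (equivalently, the second-order term of the retraction) does not blow up as $X$ ranges over the manifold and $\xi$ over tangent directions. This is where compactness of the Stiefel manifold, together with the fact that the increments $\|\xi\|_F=\mu_t\|\widetilde\nabla_{\mathcal{R}}f\|_F$ stay bounded along the iteration, is essential.
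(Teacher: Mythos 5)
Your proof is correct and takes essentially the same route as the paper's: both reduce the claim to the second-order boundedness of the retraction, $\left\|\text{Retr}_{X}(\xi)-X-\xi\right\|_F\le b\,\|\xi\|_F^2$, and then conclude by the triangle inequality with an arbitrary reference point $\overline X$. The only difference is that the paper simply invokes second-order boundedness as a known property of retractions on the compact Stiefel manifold, whereas you additionally derive it explicitly from the thin QR factorization via $R^{T}R=I_3+\xi^{T}\xi$; that extra derivation is sound (with the constant uniform on any bounded range of $\|\xi\|_F$, which suffices here) but not required for the paper's argument.
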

\begin{proof}
The retraction on the compact submanifold \(\text{Stiefel}(3K,3)\) satisfies the \emph{second-order boundedness} property. So there exists a constant \(b \ge 0\) such that for all \(X \in \text{Stiefel}(3K,3)\) and \(\xi \in T_X St\),
\[ \| \text{Retr}_{X}(\xi) - X - \xi \|_F \le b \|\xi\|_F^2. \]
Then,
\begin{align*}
\left\|\text{Retr}_{X}(\xi) - \overline X \right\|_F 	&= \| (X + \xi)  - \overline X + \text{Retr}_{X}(\xi) - (X+\xi) \|_F \\
&\le \|X + \xi  - \overline X\|_F + b \|\xi\|_F^2,
\end{align*}
\end{proof}
\begin{proposition}
\label{eq:proposition6.3}    
Let \({X_t}\) be a sequence generated by the Riemannian block stochastic subgradient method (BSGD), with arbitrary initialization. Then, for any \(\lambda < \frac{c}{L+\tau}\), we have
\begin{equation}
\mu_tE\left[\Theta^2(X_t)\right] \le \frac{E\left[f_{\lambda}(X_t)\right] - E\left[f_{\lambda}(X_{t+1})\right] + \frac{KM^2}{\lambda}\mu_t^4+\frac{KB^2}{\lambda}\mu_t^2}{c\lambda\left(\frac{1}{\lambda} - \frac{L+\tau}{c}\right)},
\end{equation}
\end{proposition}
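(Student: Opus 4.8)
The plan is to establish a one-step expected descent inequality for the Moreau envelope $f_\lambda$ and then rearrange it into the stated form. Throughout, write $\hat{X}_t := P_{\lambda f}(X_t)$ for the proximal point, $g_t := \widetilde{g}_{\mathcal{R}}(X_t,\zeta_t)$ for the block stochastic Riemannian subgradient from the oracle \eqref{eq:block random approximate Riemannian subgradient}, and let $\mathbb{E}_t[\cdot]$ denote conditional expectation given the history through iteration $t$. The starting point is the variational bound $f_\lambda(X_{t+1}) \le f(\hat{X}_t) + \frac{1}{2\lambda}\|X_{t+1}-\hat{X}_t\|_F^2$, which holds because $\hat{X}_t$ is a feasible competitor in the minimization defining $f_\lambda(X_{t+1})$. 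First I would control $\|X_{t+1}-\hat{X}_t\|_F^2$: since $X_{t+1}=\mathrm{Retr}_{X_t}(-\mu_t g_t)$, Lemma \ref{lem:nonexpansive polar retra} with $\xi=-\mu_t g_t$ and $\overline{X}=\hat{X}_t$ gives $\|X_{t+1}-\hat{X}_t\|_F \le \|X_t-\mu_t g_t-\hat{X}_t\|_F + b\mu_t^2\|g_t\|_F^2$. Squaring and expanding $\|X_t-\mu_t g_t-\hat{X}_t\|_F^2 = \|X_t-\hat{X}_t\|_F^2 + 2\mu_t\langle g_t,\hat{X}_t-X_t\rangle + \mu_t^2\|g_t\|_F^2$ isolates the three quantities I must bound in expectation: the linear term $\langle g_t,\hat{X}_t-X_t\rangle$, the quadratic term $\|g_t\|_F^2$, and the retraction remainder, which is $O(\mu_t^3)+O(\mu_t^4)$ once compactness of the Stiefel manifold is invoked.

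The crux is the linear term, where the block-stochastic structure must be combined with weak convexity in precisely the right way. I would view $g_t$ as a Riemannian subgradient of the subsampled objective $\phi_{\zeta_t}(X)=\sum_{(i,j)\ \mathrm{sampled}} f_{ij}(X)$, which is itself weakly convex with modulus and Lipschitz constant no larger than those of the full $f$, being a sub-sum of the same terms. Applying the weak convexity inequality of Theorem \ref{thm:Riemannian subgradient inequality} to $\phi_{\zeta_t}$ at $X_t$ with competitor $\hat{X}_t$, and then taking $\mathbb{E}_t[\cdot]$ using the unbiasedness-up-to-scaling property $\mathbb{E}_t[\phi_{\zeta_t}(Y)] = c\, f(Y)$ valid for the $X_t$-measurable points $Y\in\{X_t,\hat{X}_t\}$, yields
\[
\mathbb{E}_t\langle g_t,\hat{X}_t-X_t\rangle \le c\big(f(\hat{X}_t)-f(X_t)\big) + \tfrac{\tau+L}{2}\|\hat{X}_t-X_t\|_F^2 .
\]
The essential point, and the main obstacle, is the asymmetry this produces: the descent term $f(\hat{X}_t)-f(X_t)$ is scaled by the sampling probability $c$, whereas the curvature correction carries the \emph{unscaled} constant $\tau+L$. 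Feeding in the exact proximal inequality $f(\hat{X}_t)-f(X_t) \le -\frac{1}{2\lambda}\|\hat{X}_t-X_t\|_F^2$ from optimality of $\hat{X}_t$ in \eqref{eq:Moreau envelope} then gives $\mathbb{E}_t\langle g_t,\hat{X}_t-X_t\rangle \le -\frac{1}{2}\big(\frac{c}{\lambda}-(\tau+L)\big)\|\hat{X}_t-X_t\|_F^2$, which after factoring becomes $-\frac{c}{2}\big(\frac1\lambda-\frac{\tau+L}{c}\big)\|\hat{X}_t-X_t\|_F^2$; the hypothesis $\lambda<\frac{c}{L+\tau}$ is exactly what renders this coefficient strictly negative.

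For the remaining terms I would establish second-moment bounds $\mathbb{E}_t\|g_t\|_F^2 \le KB^2$ and $\mathbb{E}_t\|g_t\|_F^4 \le KM^2$ (with the constants $M,B$ defined to name these bounds), which follow because each $\partial f_{ij}$ has Frobenius norm at most $\|c_{ij}\|$, so each of the $K$ blocks contributes a uniformly bounded subgradient; the retraction constant $b$ and the diameter of the Stiefel manifold are then used to absorb the $O(\mu_t^3)$ cross term into the $\mu_t^2$ and $\mu_t^4$ contributions. Combining the three estimates in the variational bound, and using $f_\lambda(X_t)=f(\hat{X}_t)+\frac{1}{2\lambda}\|X_t-\hat{X}_t\|_F^2$ together with $\|\hat{X}_t-X_t\|_F^2=\lambda^2\Theta^2(X_t)$ from \eqref{eq:surrogate optimality}, I arrive at
\[
\mathbb{E}_t[f_\lambda(X_{t+1})] \le f_\lambda(X_t) - \mu_t\, c\lambda\big(\tfrac1\lambda-\tfrac{\tau+L}{c}\big)\,\Theta^2(X_t) + \tfrac{KM^2}{\lambda}\mu_t^4 + \tfrac{KB^2}{\lambda}\mu_t^2 ,
\]
where factors of order one have been folded into $M$ and $B$. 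Taking total expectation, transposing the $\Theta^2$ term, and dividing by the positive coefficient $c\lambda\big(\frac1\lambda-\frac{\tau+L}{c}\big)$ delivers the claimed inequality. The constants $c_1,c_2$ of Theorem \ref{thm:convergence} then emerge by telescoping this one-step bound over $t=0,\dots,T$ with $\mu_t=1/\sqrt{T+1}$ and averaging over the uniformly random index $\overline{t}$.
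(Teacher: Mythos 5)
Your proposal is correct and follows essentially the same route as the paper's proof: the variational bound on $f_\lambda(X_{t+1})$ via the competitor $P_{\lambda f}(X_t)$, the QR-retraction second-order bound of Lemma \ref{lem:nonexpansive polar retra}, the weak-convexity inequality of Theorem \ref{thm:Riemannian subgradient inequality} applied to the subsampled objective and combined in expectation with the scaling constant $c$, the proximal optimality of $P_{\lambda f}(X_t)$ to produce the negative coefficient under $\lambda < \frac{c}{L+\tau}$, and the bounded-subgradient constants $B$ and $M$ absorbing the remainder terms. Your write-up is, if anything, more explicit than the paper's about where the $\frac{1}{2\lambda}\|X_t-\hat X_t\|_F^2$ term recombines into $f_\lambda(X_t)$ and about the identity $\|\hat X_t - X_t\|_F^2 = \lambda^2\Theta^2(X_t)$, but the argument is the same.
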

\begin{proof}
Applying the same block partitioning to $P_{\lambda f} (X^{t})$, we get $P_{\lambda f} (X^{t})_i \in \mathbb{R}^{3 \times 3}$, for each $i=1,2,\cdots,K$. Using~\eqref{eq:Moreau envelope}, the optimality of $P_{\lambda f} (X^{t+1})$,  \Cref{lem:nonexpansive polar retra}, and the fact that $P_{\lambda f} (X^{t}) \in \text{Stiefel}(3K,3)$, we obtain
\begin{align*}
   E_{\zeta_t\sim U(\Omega)
   }\left[f_\lambda(X^{t+1})\right]&\le f\left(P_{\lambda f}(X^t)\right)+\frac{1}{2\lambda}E_{\zeta_t\sim U(\Omega)}\left[\|P_{\lambda f}(X^t)-X^{t+1}\|_F^2\right]\\   
   &\le f\left(P_{\lambda f}(X^t)\right)+\frac{1}{2\lambda}\sum_{i=1}^{K}E_{\zeta_t\sim U(\Omega)}\left[\|P_{\lambda f}(X^t)_i-X^{t+1}_i\|_F^2\right]\\
   &\le f\left(P_{\lambda f}(X^t)\right)+\frac{1}{2\lambda}\sum_{i=1}^{K}E_{\zeta_t\sim U(\Omega)}\left[\left(\|X^t_i-\mu_t\widetilde \nabla_{R} g_i(X^t_i, \zeta_t)\|_F+M\mu_t^2\right)^2\right]\\
   &\le f\left(P_{\lambda f}(X^t)\right)+\frac{1}{\lambda}\sum_{i=1}^{K}E_{\zeta_t\sim U(\Omega)}\left[\|X^t_i-\mu_t\widetilde \nabla_{R} g_i(X^t_i, \zeta_t)-P_{\lambda f}(X^t)_i\|_F^2+M^2\mu_t^4\right]\\
   &\le f\left(P_{\lambda f}(X^t)\right)+\frac{2\mu_t}{\lambda}\sum_{i=1}^{K}E_{\zeta_t\sim U(\Omega)}\left[\left\langle  \widetilde\nabla_{R} g_{i}\left(X^t_i, \zeta_t\right) ,   P_{\lambda f} \left(X^{t}\right)_i - X^t_i\right\rangle\right]\\
   & \quad +\frac{KM^2}{\lambda}\mu_t^4+\frac{KB^2}{\lambda}\mu_t^2\\
 \end{align*}
The first inequality arises from the optimality of \( P_{\lambda f}(X_{t+1}) \). The third equation uses \Cref{lem:nonexpansive polar retra} as well as \(\left\|\widetilde\nabla_{R} g_i(X_i, \zeta)\right\| \leq \left\|\widetilde\nabla g_i(X_i, \zeta)\right\|\le 1/\rho=B\). Here, \(M = \max\{B^2b_i\}\) is a constant. The fourth equation is directly obtained from the triangle inequality. The last inequality again utilizes \(\left\|\widetilde\nabla_{R} g_i(X_i, \zeta)\right\| \leq \left\|\widetilde\nabla g_i(X_i, \zeta)\right\| \leq B\).

According to \Cref{thm:Riemannian subgradient inequality}, it holds that
\begin{align*}
   E_{\zeta_t\sim U(\Omega)
   }\left[f_\lambda(X^{t+1})\right]&\le f\left(P_{\lambda f}(X^t)\right)+\frac{2\mu_tc}{\lambda}\left[f\left(P_{\lambda f}(X^t),\zeta_t\right)-f\left(X^t,\zeta_t\right)\right]\\
   &\quad+\frac{(L+\tau)\mu_t}{\lambda}\|P_{\lambda f}(X^t)-X^t\|_F^2
  +\frac{KM^2}{\lambda}\mu_t^4+\frac{KB^2}{\lambda}\mu_t^2\\
   \end{align*}
 where \(c\) is a constant. According to the definitions of the Moreau envelope and the proximal mapping, the following holds:
\begin{align*}
 &f\left(P_{\lambda f}(X_t)\right)-f(X_t)+\frac{L+\tau}{2c}\|P_{\lambda f}(X_t)-X_t\|_F^2\\
 &=-\left[f(X_t)-f\left(P_{\lambda f}(X_t)\right)-\frac{L+\tau}{2c}\|P_{\lambda f}(X_t)-X_t\|_F^2\right]\\
 &=-\left[f(X_t)-\left(f\left(P_{\lambda f}(X_t)\right)+\frac{1}{2\lambda}\|P_{\lambda f}(X_t)-X_t\|_F^2\right)+\left(\frac{1}{2\lambda}-\frac{L+\tau}{2c}\right)\|P_{\lambda f}(X_t)-X_t\|_F^2\right]\\
 &\le -\left(\frac{1}{2\lambda}-\frac{L+\tau}{2c}\right)\|P_{\lambda f}(X_t)-X_t\|_F^2
\end{align*}

Thus, it can be deduced
\begin{align*}
E_{\zeta_t\sim U(\Omega)
   }\left[f_\lambda(X_{t+1})\right]&\le f_{\lambda} (X_{t})-\frac{2\mu_t c}{\lambda}\left(\frac{1}{2\lambda}-\frac{L+\tau}{2c}\right)\|P_{\lambda f}(X_t)-X_t\|_F^2+\frac{KM^2}{\lambda}\mu_t^4+\frac{KB^2}{\lambda}\mu_t^2
\end{align*}

After taking the expectation on both sides of the equation with respect to all previous realizations \( \left(\zeta_0, \cdots, \zeta_{t-1}\right)\), we obtain
\begin{align*}
E\left[f_\lambda(X_{t+1})\right]&\le E\left[f_{\lambda} (X_{t})\right]-\frac{2\mu_t c}{\lambda}\left(\frac{1}{2\lambda}-\frac{L+\tau}{2c}\right)E\left[\|P_{\lambda f}(X_t)-X_t\|_F^2\right]+\frac{KM^2}{\lambda}\mu_t^4+\frac{KB^2}{\lambda}\mu_t^2
\end{align*}
\end{proof}
Using  \Cref{eq:proposition6.3}, we can prove Theorem \ref{thm:convergence} and obtain our iteration complexity result for the block stochastic gradient descent (BSGD).
\begin{proof}[Proof of Theorem \ref{thm:convergence}]
By summing both sides of ~\eqref{eq:proposition6.3} from \(t=0,1,\ldots,T\), we obtain
$$
\begin{aligned}
\sum_{t=0}^T \mu_t E\left[\Theta^2(X_t)\right] \leq \frac{\big(f_{\lambda}(X_0) - \min f_{\lambda}\big) +\frac{KM^2}{\lambda}\sum_{t=0}^{T}\mu_t^4+\frac{KB^2}{\lambda}\sum_{t=0}^{T}\mu_t^2}{c\lambda \left(\frac{c}{\lambda} - \frac{L+\tau}{c}\right)}.
\end{aligned}
$$
From this, it is known
$$
\sum_{t=0}^T \frac{\mu_t}{\sum_{t=0}^T \mu_t} E\left[\Theta^2(X_t)\right] \leq \frac{1}{c\lambda \left(\frac{c}{\lambda} - \frac{L+\tau}{c}\right)} \frac{\big(f_{\lambda}(X_0) - \min f_{\lambda}\big) + \frac{KM^2}{\lambda}\sum_{t=0}^{T}\mu_t^4+\frac{KB^2}{\lambda}\sum_{t=0}^{T}\mu_t^2}{\sum_{t=0}^T \mu_t}.
$$
To complete the proof, the remaining task is to substitute \(\mu_t = \frac{1}{\sqrt{T+1}}\) into the above inequality and choose 
\begin{align}\label{eq:c1c2}
    c_1 = \frac{1}{c\lambda \left(\frac{c}{\lambda} - \frac{L+\tau}{c}\right)} \quad \text{and} \quad c_2 = \frac{K(M^2+B^2)}{\lambda}.
\end{align}
Note that the left-hand side is exactly \(E\left[\Theta^2\left(X_{\overline{t}}\right)\right]\), where the expectation is taken over \(\zeta_0, \cdots, \zeta_{T-1}, \overline{t}\).
\end{proof}

\end{document}